\newtheorem{theorem}{Theorem}[section]
\newtheorem{proposition}[theorem]{Proposition}
\newtheorem{lemma}[theorem]{Lemma}
\theoremstyle{definition}
\newtheorem{remark}[theorem]{Remark}
\newcommand{\id}{\mathrm{Id}}
\newcommand{\Acal}{\ensuremath{{\mathcal{A}}}}
\newcommand{\N}{\ensuremath{{\mathbb N}}}
\newcommand{\R}{\ensuremath{{\mathbb R}}}
\newcommand{\be}{\begin{enumerate}}
\newcommand{\ee}{\end{enumerate}}
\newcommand{\Hcal}{\mathcal{H}}
\DeclareMathOperator{\sgn}{sgn}
\newcommand*{\argmin}{\operatornamewithlimits{argmin}\limits}
\DeclareMathOperator{\rank}{rank}
\DeclareMathOperator{\prox}{prox}
\DeclareMathOperator{\diag}{diag}
\def\Chi{\raise .3ex\hbox{\large $\chi$}}
\newcommand{\bu}{{\bf u}}
\newcommand{\bw}{{\bf w}}
\newcommand{\bv}{{\bf v}}
\newcommand{\weakl}[1]{\ensuremath{\ell_{#1,\infty}}}
\providecommand{\abs}[1]{\lvert#1\rvert}
\providecommand{\norm}[1]{\lVert#1\rVert}
\providecommand{\bignorm}[1]{\bigl\lVert#1\bigr\rVert}
\newcommand{\bS}{\mathbf{S}}
\newcommand{\br}{\mathbf{r}}
\newcommand{\cF}{\mathcal{F}}
\newcommand{\cG}{\mathcal{G}}
\newcommand{\cH}{\mathcal{H}}
\newcommand{\cS}{\mathcal{S}}
\newcommand{\HS}{{\mathrm{HS}}}
\algrenewcommand\algorithmicrequire{\textrm{parameters:}}
\algrenewcommand\algorithmicensure{\textrm{output:}}
\algrenewcommand\algorithmicfunction{\textrm{function}}
\algrenewcommand\algorithmicwhile{\textrm{while}}
\algrenewcommand\algorithmicdo{}
\algrenewcommand\algorithmicend{\textrm{end}}
\algrenewcommand\algorithmicforall{\textrm{for all}}
\algrenewcommand\algorithmicfor{\textrm{for}}
\algrenewcommand\algorithmicrepeat{\textrm{repeat}}
\algrenewcommand\algorithmicuntil{\textrm{until}}
\algrenewcommand\algorithmicif{\textrm{if}}
\algrenewcommand\algorithmicelse{\textrm{else}}
\algrenewcommand\algorithmicthen{\textrm{then}}
\DeclareMathOperator{\solve}{\textsc{STSolve}}
\DeclareMathOperator{\iesolve}{\textsc{InexactSTSolve}}
\title{Iterative Methods Based on Soft Thresholding of Hierarchical Tensors\thanks{This research was supported in part by DFG SPP 1324 and ERC AdG BREAD.}}
\date{\today}
\author{Markus Bachmayr and Reinhold Schneider}
\begin{document}
\maketitle

\begin{abstract}
We construct a soft thresholding operation for rank reduction of hierarchical tensors and subsequently consider its use in iterative thresholding methods, in particular for the solution of discretized high-dimensional elliptic problems. 
The proposed method for the latter case automatically adjusts the thresholding parameters, based only on bounds on the spectrum of the operator, such that the arising tensor ranks of the resulting iterates remain quasi-optimal with respect to the algebraic or exponential-type decay of the hierarchical singular values of the true solution. In addition, we give a modified algorithm using inexactly evaluated residuals that retains these features. The practical efficiency of the scheme is demonstrated in numerical experiments.
\end{abstract}

\section{Introduction}

Subspace-based tensor representations such as the \emph{hierarchical Tucker format} \cite{Hackbusch:09} or the special case of the \emph{tensor train format} \cite{Oseledets:09} enable the numerical treatment of problems in very high dimensions by exploiting particular low-rank structures. Here one has a well-defined notion of tensor rank as a tuple of matrix ranks of certain matricizations. From a computational perspective, these tensor formats have the major advantage that for any tensor given in such a representation, by a simple combination of linear algebraic procedures, one may obtain an error-controlled, quasi-optimal approximation by a tensor of lower ranks; this can be achieved by truncating the ranks of a \emph{hierarchical singular value decomposition} \cite{vidal,Grasedyck:10,Oseledets:11}, or HSVD for short, of the tensor.

In this work, we consider an alternative procedure for reducing ranks that is based on \emph{soft thresholding} of the singular values in a HSVD, as opposed to the mentioned rank truncation (which would correspond to their hard thresholding). The new procedure has similar complexity and quasi-optimality properties, but unlike the truncation it is \emph{non-expansive}, which turns out to be a major advantage in the context of iterative schemes.

A large part of the results that we obtain are in fact applicable to fixed point iterations based on general contractive mappings.
The iterative scheme that we focus on as an example, however, is a Richardson iteration for solving the linear equation  
\begin{equation}\label{opeq}
\mathcal{A} \mathbf{u} = \mathbf{f} 
\end{equation}
on a separable tensor product Hilbert space 
\[  \mathcal{H}   = \mathcal{H}_1 \otimes \cdots\otimes\mathcal{H}_d  \,, \]
where $\mathbf{f} \in \mathcal{H} $ is given and
$\mathcal{A} : \mathcal{H} \to \mathcal{H} $
is elliptic on $\mathcal{H}$; that is, the iteration is of the form
\begin{equation}\label{truncatedrichardson}
  \bu_{k+1} = \mathbf{S}_{\alpha_k} \bigl( \bu_k - \mu (\mathcal{A} \bu_k - \mathbf{f}) \bigr) \,,
\end{equation}
where $\mathbf{S}_{\alpha}$ is the proposed soft thresholding procedure with suitable parameters $\alpha_k$.

Even when both $\mathcal{A}$ and $\mathbf{f}$ have exact low-rank representations, the unique solution $ \mathbf{u}^* $ of the problem \eqref{opeq} may no longer be of low rank. It turns out, however, that in many cases of interest, $\mathbf{u^*}$ can still be efficiently \emph{approximated} by low-rank tensors up to any given error tolerance. Here one can obtain algebraic error decay with respect to the ranks under fairly general conditions \cite{SchneiderUschmajew:14,KressnerUschmajew:14}, and superalgebraic or exponential-type decay in more specific situations \cite{KressnerTobler:11,Grasedyck:04,DDGS:14}. 

When a solution $\bu^*$ that has this property is approximated by an iteration such as \eqref{truncatedrichardson}, it is not a priori clear to what extent also the iterates $\bu_{k}$ retain comparably low ranks, since the basic iteration without truncation can in principle lead to an exponential rank increase.
That the ranks of $\bu_k$ remain comparable to those needed for approximating $\bu^*$ at the current accuracy therefore depends crucially on the appropriate choice of thresholding parameters $\alpha_k$. Keeping the tensor ranks of iterates as low as possible is of crucial importance for the efficiency of such methods, since the complexity of the tensor operations that need to be performed grows like the fourth power of these ranks.

We show in this work that when the rank reduction in each step is done by the tensor soft thresholding procedure that we shall describe, \emph{quasi-optimal} tensor ranks can be enforced \emph{for each single iterate} $\bu_k$, irrespective of the rank increase caused by $\Acal$. 
This means that, assuming that the hierarchical singular values of $\bu^*$ have either algebraic or exponential-type decay, the tensor ranks of each $\bu_k$ can be bounded up to a uniform constant by the maximum rank of the best hierarchical tensor approximation to $\bu^*$ of the same accuracy.
To this end, we exploit the non-expansiveness of soft thresholding, which allows us to choose the thresholding parameters in each step as large as required to  control the ranks, without compromising the convergence of the iteration. 

After describing the construction of the operation $\bS_{\alpha_k}$, we begin by identifiying choices of geometrically decreasing $\alpha_k$ that lead to the desired rank estimates provided that the precise decay of the hierarchical singular values of $\bu^*$ is known. On this basis, we then construct a scheme which, based on a certain a posteriori criterion, adjusts $\alpha_k$ to the unknown decay of hierarchical singular values such that the quasi-optimal ranks are preserved.
This method requires no a priori information except for bounds on the spectrum of $\Acal$ and the norm of $\mathbf{f}$.
In a third step, we develop a perturbed version of the scheme that permits inexactly evaluated residuals.

For the case that the rank reduction is done by a truncated HSVD (that is, by hard thresholding), a scheme for choosing thresholding parameters that leads to near-optimal ranks is given in \cite{BD1,BD2}; to the authors' knowledge, this is the only previous instance of a method that guarantees global converge to the true solution while at the same time, the arising ranks can be estimated in terms of the ranks required for the approximation of the solution.
A limitation of the approach used there to control the ranks is that their near-optimality is enforced by truncating with a sufficiently high tolerance, which can be done only after every few iterations when a certain error reduction has been achieved. The ranks of intermediate iterates can therefore still accumulate in the iterations between these complexity reductions, which can be problematic if each application of the operator already causes a large rank increase.
In the method proposed here, this accumulation can be ruled out, and intermittent, sufficiently large increases of approximation errors that restore quasi-optimality are no longer necessary.

Note that here we do not address the aspect of an adaptive underlying discretization of the problem as considered in \cite{BD1,BD2}. Although our resulting procedure of the form \eqref{truncatedrichardson} can in principle be formulated on infinite-dimensional Hilbert spaces, in this work we restrict our considerations concerning a numerically realizable version to \emph{fixed discretizations}.
In other words, in the form given here, the scheme applies either to infinite-dimensional $\mathcal{H}_i \simeq \ell_2(\N)$ (which is of course not implementable in practice), or to a fixed finite-dimensional choice $\mathcal{H}_i \simeq \R^{n_i}$.
The version of the algorithm allowing inexact evaluation of residuals can, however, serve as a starting point for combining the method with adaptive concepts for identifying suitable subsets of $\ell_2(\N)$ in the course of the iteration. 
Furthermore, we expect that the concepts put forward here can also be used in the construction of adaptive methods for sparse basis representations.

Iterations using soft thresholding of sequences have been studied extensively in the context of inverse and ill-posed problems, see e.g.\ \cite{Daubechiesetal:08,BrediesLorenz:08,BeckTeboulle}, where they are especially well suited for obtaining convergence under very general conditions.
Note that in such a setting, a priori choices of geometrically decreasing thresholding parameters have been proposed, e.g., in \cite{Wrightetal:09} and \cite{DahlkeFornasierRaasch:12}.
However, our approach for controlling the complexity of iterates -- in the present case, the arising tensor ranks -- in iterative schemes for well-posed problems appears to be new, in particular the a posteriori criterion that steers the decrease of the thresholding parameters.

The proposed method can also be motivated by a variational formulation of the problem. For instance, if $\Acal$ is symmetric, the solution is characterized by
\[
 \mathbf{u}^* =  \argmin_{\mathbf{v} \in \mathcal{H}}  \Bigl\{  \frac{1}{2} \langle  \mathcal{A} \mathbf{v}  , \bv \rangle 
       - \langle  \mathbf{f}, \bv \rangle  \Bigr\}  \,.
\]
A standard way to solve this problem in the spirit of 
a Ritz-Galerkin method would be to restrict it to the manifold of hierarchical tensors with 
fixed rank and treat the resulting minimization problem by Riemannian gradient methods 
or alternating least squares techniques \cite{HRS}.
However, the sets over which one needs to minimize are not convex, and there generally exist many local minima.
Roughly speaking, in such methods one fixes the model class (in this case by the admissible hierarchical tensor ranks) and
attempts to  minimize the error over this class.

In an alternative variational formulation, one can prescribe an error tolerance, for instance
$  \| \mathcal{A}\bv - \mathbf{f} \| \leq \varepsilon$, and attempt to minimize the tensor ranks over the set of such $\bv$. Although the admissible set is then convex, even in the matrix case $d=2$ the rank does not give a convex functional. However, one can instead minimize an appropriate convex relaxation such as the $\ell_1$-norm of singular values. It is well known that in the matrix case, such relaxed problems can be solved by proximal gradient methods, which can be rewritten as iterative soft thresholding \cite{Ma:11} and hence take precisely the form \eqref{truncatedrichardson} when $d=2$. In this case, our method can therefore also be motivated as a rank minimization scheme, although this connection does not play a role in the analysis. Note, however, that in the case of higher-order tensors, where our soft thresholding procedure no longer permits an interpretation of the resulting scheme as a proximal gradient method, this is only a formal analogy.

This article is arranged as follows: in Section \ref{sec:prelim}, we collect some prerequisites concerning the hierarchical tensor format as well as soft thresholding of sequences and of Hilbert-Schmidt operators. In Section \ref{sec:hierarchst}, we then describe and analyze the new soft thresholding procedure for hierarchical tensors.
In Section \ref{sec:fpst}, we consider the combination of this procedure with general contractive fixed point iterations and derive rank estimates for sequences of thresholding parameters that are chosen based on a priori information on the tensor approximability of $\bu^*$. In Section \ref{sec:aposteriori}, we introduce an algorithm that automatically determines a suitable choice of thresholding parameters without using information on $\bu^*$, analyze its convergence, and additionally give a modified version of the scheme based on inexact residual evaluations. In Section \ref{sec:numexp}, we conclude with numerical experiments on a discretized high-dimensional Poisson problem that illustrate the practical performance of the proposed method.

Throughout this paper, the notation $A\lesssim B$ is used to indicate that there exists a constant $C>0$ such that $A \leq C B$.

\section{Preliminaries}\label{sec:prelim}

By $\norm{\cdot}$, we always denote either the canonical norm on $\Hcal$, which is the product of the norms on the $\Hcal_i$, or the $\ell_2$-norm when applied to a sequence.

To quantify the sparsity of sequences, we shall frequently use membership in \emph{weak-$\ell_p$ spaces}, which are defined as follows:
For a given sequence $a = (a_k)_{k\in\N}$, for each $n\in \N$ let $a^*_n$ be the $n$-th largest of the values $\abs{a_k}$. Then for $p>0$, the space $\weakl{p}$ is defined as the collection of sequences for which
\[    \abs{a}_{\weakl{p}}  :=  \sup_{n\in\N} n^{\frac1p}  a^*_n \]
is finite, and this quantity defines a (quasi-)norm on $\weakl{p}$. We will use these spaces with $p<2$, which implies $\weakl{p} \subset \ell_2$; note that one always has $\ell_p \subset \weakl{p} \subset \ell_{p'}$ for all $p' > p$.

For separable Hilbert spaces $\cS_1$, $\cS_2$, we write $\HS(\cS_1, \cS_2)$ for the space of Hilbert-Schmidt operators from $\cS_1$ to $\cS_2$ with the 
Hilbert-Schmidt norm $\norm{\cdot}_\HS$, which reduces to the Frobenius norm in the case of finite-dimensional spaces.
Hilbert-Schmidt operators have a singular value decomposition with singular values in $\ell_2$,  satisfying the following perturbation estimate, cf.\ \cite{Mirsky:60}.

\begin{theorem}\label{thm:mirsky}
Let $\cS_1$, $\cS_2$ be separable Hilbert spaces, let $\mathbf{X}, \mathbf{\tilde X} \in \HS(\cS_1, \cS_2)$, and let $\sigma, \tilde\sigma \in \ell_2(\N)$ denote the corresponding sequences of singular values. Then
$\norm{ \sigma - \tilde\sigma}_{\ell_2(\N)}  \leq \norm{ \mathbf{X} - \mathbf{\tilde X} }_{\HS}$.
\end{theorem}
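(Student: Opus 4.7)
My plan is to reduce the Hilbert--Schmidt statement to the classical matrix case via a self-adjoint dilation. Given $\mathbf{X} \in \HS(\cS_1, \cS_2)$, consider the augmented operator on $\cS_1 \oplus \cS_2$ defined in block form by
\[
 \hat{\mathbf{X}} = \begin{pmatrix} 0 & \mathbf{X}^* \\ \mathbf{X} & 0 \end{pmatrix}.
\]
This operator is self-adjoint and Hilbert--Schmidt, with $\norm{\hat{\mathbf{X}}}_\HS^2 = 2 \norm{\mathbf{X}}_\HS^2$. A direct computation from the SVD $\mathbf{X} = \sum_i \sigma_i \langle v_i, \cdot\rangle u_i$ shows that the eigenpairs of $\hat{\mathbf{X}}$ are $\bigl(\pm \sigma_i, \tfrac{1}{\sqrt{2}}(v_i, \pm u_i)\bigr)$ together with a zero eigenspace. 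The analogous statements hold for $\hat{\tilde{\mathbf{X}}}$.

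Next I would invoke a Hoffman--Wielandt inequality for self-adjoint Hilbert--Schmidt operators, namely: if $\mathbf{A}, \mathbf{B}$ are compact self-adjoint Hilbert--Schmidt operators with eigenvalues arranged in decreasing order (with multiplicity), then pairing them in that order yields $\sum_i (\lambda_i(\mathbf{A}) - \lambda_i(\mathbf{B}))^2 \leq \norm{\mathbf{A}-\mathbf{B}}_\HS^2$. I would first establish this for matrices via the identity
\[
 \norm{\mathbf{A}-\mathbf{B}}_\HS^2 = \sum_i \lambda_i(\mathbf{A})^2 + \sum_i \lambda_i(\mathbf{B})^2 - 2\, \mathrm{tr}(\mathbf{A}\mathbf{B})
\]
together with the trace inequality $\mathrm{tr}(\mathbf{A}\mathbf{B}) \leq \sum_i \lambda_i^\downarrow(\mathbf{A})\, \lambda_i^\downarrow(\mathbf{B})$; the latter can be derived from the spectral decompositions $\mathbf{A} = \sum \lambda_i(\mathbf{A}) P_i$, $\mathbf{B} = \sum \lambda_j(\mathbf{B}) Q_j$ by writing $\mathrm{tr}(\mathbf{A}\mathbf{B}) = \sum_{ij} \lambda_i(\mathbf{A}) \lambda_j(\mathbf{B})\, \mathrm{tr}(P_i Q_j)$, observing that the matrix $[\mathrm{tr}(P_i Q_j)]_{ij}$ is doubly stochastic, and applying Birkhoff's theorem together with the rearrangement inequality. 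To lift this to the Hilbert--Schmidt setting, I would truncate the spectral expansions of $\mathbf{A}$ and $\mathbf{B}$ to finite rank, apply the matrix inequality, and pass to the limit using that HS-norm convergence of the truncations is automatic and that the ordered eigenvalues converge at least pointwise.

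Applying this to $\mathbf{A} = \hat{\mathbf{X}}$ and $\mathbf{B} = \hat{\tilde{\mathbf{X}}}$, and using the fact that the nonzero eigenvalues of $\hat{\mathbf{X}}$ come in symmetric pairs $\pm \sigma_i$, the left-hand side becomes exactly $2 \sum_i (\sigma_i - \tilde\sigma_i)^2$, while the right-hand side is $2 \norm{\mathbf{X}-\tilde{\mathbf{X}}}_\HS^2$; dividing by $2$ and taking square roots gives the claim. The main obstacle, I expect, is the Hoffman--Wielandt step itself: the matrix proof is essentially combinatorial (doubly stochastic matrices and Birkhoff), but care is required in passing to the infinite-dimensional setting so that the rearranged sum of squared eigenvalue differences remains well-defined and the truncation argument preserves the pairing. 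Everything else---the dilation, the bookkeeping of eigenvalues, and the norm scaling---is a routine computation.
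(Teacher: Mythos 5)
Your argument is correct and complete. Note that the paper does not itself prove Theorem \ref{thm:mirsky}: it only cites Mirsky's $1960$ matrix result and asserts without elaboration that the proof carries over to the Hilbert--Schmidt setting, so you are supplying the argument that the paper leaves implicit. Your route---a self-adjoint dilation to convert singular values into eigenvalues of an operator whose spectrum is symmetric about $0$, the Hermitian Hoffman--Wielandt inequality obtained from the trace bound $\operatorname{tr}(\mathbf{A}\mathbf{B}) \leq \sum_i \lambda_i^{\downarrow}(\mathbf{A})\,\lambda_i^{\downarrow}(\mathbf{B})$ via Birkhoff's theorem and the rearrangement inequality, and a finite-rank truncation to pass from matrices to Hilbert--Schmidt operators---is the standard textbook treatment and is essentially the same device underlying Mirsky's matrix result, so it is the natural way to close the gap. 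Two details deserve to be made explicit in a polished write-up. First, for $\bigl[\operatorname{tr}(P_iQ_j)\bigr]_{ij}$ to be doubly stochastic you must take the $P_i$ and $Q_j$ to be rank-one projections onto individual eigenvectors of an orthonormal eigenbasis (counting multiplicity); using full eigenspace projections yields row and column sums equal to the multiplicities, not $1$, and one needs to refine the decomposition before invoking Birkhoff. Second, when reading off the conclusion from the dilation, the monotone-order pairing of the eigenvalues of $\hat{\mathbf{X}}$ and $\hat{\tilde{\mathbf{X}}}$ identifies $\sigma_i \leftrightarrow \tilde\sigma_i$ and $-\sigma_i \leftrightarrow -\tilde\sigma_i$ only because both dilated spectra are symmetric about $0$ (and both have infinite-dimensional kernel when the $\cS_i$ are infinite-dimensional), so that positive eigenvalues never cross over to pair with negative ones; this is automatic here but is exactly where a naive application of Hoffman--Wielandt to general Hermitian pairs could misassign the pairing, and it is worth one sentence to say so. With those details spelled out, the proof is airtight.
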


Note that this was shown for matrices in \cite{Mirsky:60}, but the proof immediately carries over to Hilbert-Schmidt operators.

\subsection{The Hierarchical Tensor Format}

We now briefly recall definitions and facts concerning the hierarchical Tucker format \cite{Hackbusch:09} and collect some basic observations that will play a role later.
For further details on the hierarchical format, we refer to \cite{Hackbusch:12}.

Throughout this work, we assume $d \geq 2$. Let $\mathbb{T}$ be a binary dimension tree for tensor order $d$, that is, with \emph{root element} $\{1,\ldots,d\}$; examples for $d=4$ are given in Figure \ref{htequiv}. We adopt the terminology of \cite{Grasedyck:10}, referring to the collections of basis vectors in the leaves of the tree as \emph{mode frames}, and to the coefficient tensors at interior nodes of the tree as \emph{transfer tensors}.

We shall later make use of a certain equivalence between dimension trees, which we formulate in terms of the \emph{edges} in these trees.
For each node $n\in \mathbb{T}$, we set $[n]:= \{ 1,\ldots, d\}\setminus n$. In general, $[n]\notin \mathbb{T}$.
Let
$$   \mathbb{E} := \bigl\{  \{ n , [ n]\} \colon n\in\mathbb{T}\setminus \{1,\ldots,d\}  \bigr\} \,. $$
Then the elements of $\mathbb{E}$ correspond precisely to the edges in the tree $\mathbb{T}$, where the root element $ \{1,\ldots,d\}$ is regarded as part of an edge. We set $E:= \#\mathbb{E} = 2 d - 3$.

For a given set of edges, there are several dimension trees that correspond to the same matricizations of the tensor, but have the root element of the tree at a different edge. This is illustrated in Figure \ref{htequiv} for a tensor of order four. Moving the root element in the tensor representation can be done in practice by basic linear algebra manipulations, where the existing component tensors in the representation are simply relabelled and reorthogonalized accordingly. For instance, in passing from the first to the second tree in Figure \ref{htequiv}, the transfer tensor for node $\{2,3,4\}$ is relabelled to $\{1,3,4\}$.

For what follows, we always assume a fixed enumeration $\{ n_t, [n_t] \}$, $t=1,\ldots, E$, of $\mathbb{E}$. Note that the efficiency of the algorithms we will describe may depend on this sequence. For practical purposes, it should be chosen such that moving the root element from one edge to the next in the enumeration takes as little work as possible, as for instance in Figure \ref{htequiv}.
For each $t$, we denote by $\mathcal{M}_t(\bu)$ the \emph{matricization} corresponding to the $t$-th edge of the tensor $\bu$, which for infinite-dimensional $\Hcal$ is a Hilbert-Schmidt operator
\begin{equation*}
   \mathcal{M}_t(\bu) \colon \bigotimes_{i \in [n_t]} \Hcal_i  \to \bigotimes_{i \in n_t} \Hcal_i \,,
\end{equation*}
and by $\mathcal{M}^{-1}_t$ we denote the mapping that converts a matricization back to a tensor.
Note that for each $t$, one has $\mathcal{M}_t(\bu) + \mathcal{M}_t(\bv) = \mathcal{M}_t(\bu+\bv)$ and 
\begin{equation}\label{matricizationnorm} 
   \norm{\bu} = \norm{\mathcal{M}_t(\bu)}_\HS \,.
\end{equation}
The sequence of singular values of this matricization is denoted by $\sigma_t (\bu)$, which we always assume to be defined on $\N$ (with extension by zero in the case of finite-dimensional $\cH_i$), and we set
\[   \rank_t(\bu) := \rank  \bigl(\mathcal{M}_t(\bu)\bigr) = \# \{  k\in\N \colon \sigma_{t,k}(\bu) \neq 0 \} \,.   \]
\begin{figure}
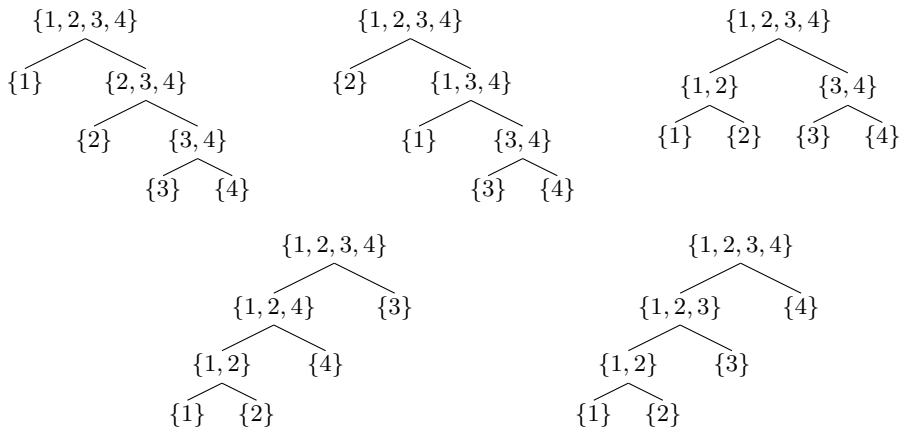
\centering
{\footnotesize\Tree[.$\{1,2,3,4\}$ [.$\{1\}$ ]
        [.$\{2,3,4\}$ 
              [.$\{2\}$ ] [.$\{3,4\}$  [.$\{3\}$ ] [.$\{4\}$ ] ] ] ] }  
{\footnotesize\Tree[.$\{1,2,3,4\}$ [.$\{2\}$ ]  [.$\{1,3,4\}$
       [.$\{1\}$ ] [.$\{3,4\}$ 
              [.$\{3\}$ ] [.$\{4\}$ ] ] ] ]}
{\footnotesize\Tree[.$\{1,2,3,4\}$ [.$\{1,2\}$ 
       [.$\{1\}$ ] [.$\{2\}$ ] ] 
       [.$\{3,4\}$ 
              [.$\{3\}$ ] [.$\{4\}$ ] ] ] } 
\vspace{9pt}
              
{\footnotesize\Tree[.$\{1,2,3,4\}$ 
        [.$\{1,2,4\}$ 
               [.$\{1,2\}$  [.$\{1\}$ ] [.$\{2\}$ ] ] [.$\{4\}$ ] ] [.$\{3\}$ ] ] }             
{\footnotesize\Tree[.$\{1,2,3,4\}$ 
        [.$\{1,2,3\}$ 
               [.$\{1,2\}$  [.$\{1\}$ ] [.$\{2\}$ ] ] [.$\{3\}$ ] ] [.$\{4\}$ ] ] }    
\caption{Examples of equivalent dimension trees obtained by moving the root element.}\label{htequiv}
\end{figure}

\subsection{Soft  Thresholding}

For $x \in \R$, soft thresholding with parameter $\alpha > 0$
 is defined by 
\[  s_{\alpha } (x) :=  \sgn(x) \max \{ \abs{x} - \alpha , 0 \} \,. \]
In comparison, hard thresholding is given by 
$  h_{\alpha} (x)  := \bigl(1 - \Chi_{[-\alpha, \alpha]}(x) \bigr) x$.

Applied to each element of a vector or sequence, hard thresholding provides a very natural means of obtaining sparse approximations by dropping entries of small absolute value, which is closely related to best $n$-term approximation \cite{Devore:98}.
In contrast, soft thresholding not only replaces entries that have absolute value below the threshold by zero, but also decreases all remaining entries, incurring an additional error. However, this operation has a non-expansiveness property that is useful in the construction of iterative schemes, and that can be derived from a variational characterization.

To describe this characterization, for a proper, closed convex functional $\mathcal{J} : \mathcal{G}  \to \mathbb{R} $ on a Hilbert space $\mathcal{G}$ and $ \alpha \geq 0$, following \cite{Moreau:65} we define the {\em proximity operator}  $\prox^{\alpha}_{\mathcal{J}} : \mathcal{G} \to \mathcal{G} $ by 
\begin{equation} \label{eq:prox}
\prox^{\alpha}_{\mathcal{J}} (\mathbf{u})  := \argmin_{\mathbf{v} \in \mathcal{G} }   
 \Bigl\{  \alpha \mathcal{J}(\mathbf{v}) + \frac{1}{2 } \norm{ \mathbf{u} - \mathbf{v} }^2_\mathcal{G}   \Bigr\}  \ . 
\end{equation}
As shown in \cite{Moreau:65}, such operators have the following general property, which plays a crucial role in this work.

\begin{lemma}\label{nonexp}
The proximity  operator $ \prox^{\alpha }_{\mathcal{J}} $ is {\em non-expansive}, that is,
$$
\norm{ \prox^{\alpha }_{\mathcal{J}} (\mathbf{u} )   - \prox^{\alpha }_{\mathcal{J}} (\mathbf{v} ) }_\mathcal{G}
   \leq  \norm{ \mathbf{u}    -  \mathbf{v}  }_\mathcal{G} \,,\quad \bu,\bv\in\mathcal{G}\,.
$$
\end{lemma}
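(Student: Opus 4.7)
The plan is to use the first-order optimality characterization of the proximity operator as a variational inequality, which allows me to avoid bringing in the subdifferential machinery explicitly. Concretely, since for fixed $\mathbf{u}$ the functional $\bv \mapsto \alpha \JJJ(\bv) + \tfrac{1}{2}\norm{\bu-\bv}_\GGG^2$ is proper, closed, convex, and strongly convex (with modulus one from the quadratic part), it has a unique minimizer $\mathbf{p}=\prox^\alpha_\JJJ(\mathbf{u})$. The strong convexity additionally gives that for every $\mathbf{w}\in\GGG$,
\[
\alpha \JJJ(\mathbf{w}) + \tfrac{1}{2}\norm{\bu-\mathbf{w}}_\GGG^2 \;\geq\; \alpha \JJJ(\mathbf{p}) + \tfrac{1}{2}\norm{\bu-\mathbf{p}}_\GGG^2 + \tfrac{1}{2}\norm{\mathbf{w}-\mathbf{p}}_\GGG^2,
\]
which, after expanding and rearranging, yields the variational inequality
\[
\alpha \JJJ(\mathbf{w}) - \alpha \JJJ(\mathbf{p}) + \langle \mathbf{p} - \bu, \mathbf{w}-\mathbf{p}\rangle_\GGG \;\geq\; 0 \qquad \forall \mathbf{w}\in\GGG.
\]

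Next, I would apply this to both proximal points simultaneously. Writing $\mathbf{p}=\prox^\alpha_\JJJ(\bu)$ and $\mathbf{q}=\prox^\alpha_\JJJ(\bv)$, I plug $\mathbf{w}=\mathbf{q}$ into the inequality for $\bu$ and $\mathbf{w}=\mathbf{p}$ into the inequality for $\bv$:
\[
\alpha \JJJ(\mathbf{q}) - \alpha \JJJ(\mathbf{p}) + \langle \mathbf{p} - \bu, \mathbf{q}-\mathbf{p}\rangle_\GGG \geq 0, \qquad \alpha \JJJ(\mathbf{p}) - \alpha \JJJ(\mathbf{q}) + \langle \mathbf{q} - \bv, \mathbf{p}-\mathbf{q}\rangle_\GGG \geq 0.
\]
Adding these two inequalities cancels the $\JJJ$ values entirely and leaves $\langle \bu-\bv, \mathbf{p}-\mathbf{q}\rangle_\GGG \geq \norm{\mathbf{p}-\mathbf{q}}_\GGG^2$. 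A single application of Cauchy--Schwarz on the left-hand side then gives $\norm{\mathbf{p}-\mathbf{q}}_\GGG \leq \norm{\bu-\bv}_\GGG$, which is the claim.

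There is no real obstacle here: the entire argument rests on convexity plus the identity $\tfrac{1}{2}\norm{\bu-\mathbf{w}}^2 - \tfrac{1}{2}\norm{\bu-\mathbf{p}}^2 = \langle \mathbf{p}-\bu,\mathbf{w}-\mathbf{p}\rangle + \tfrac{1}{2}\norm{\mathbf{w}-\mathbf{p}}^2$, and the cancellation of the (possibly non-smooth) $\JJJ$ terms in the symmetrization step, which is the whole point of adding the two variational inequalities. The only mild subtlety worth noting is that well-definedness of $\prox^\alpha_\JJJ$ must be invoked before the variational inequality can be written; this is where properness and lower semicontinuity of $\JJJ$, together with coercivity of the quadratic term, enter.
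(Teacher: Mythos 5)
Your argument is correct and complete. Note that the paper itself does not supply a proof of this lemma; it simply cites Moreau's 1965 paper, so there is no in-paper argument to compare against. Your route via the variational inequality $\alpha \mathcal{J}(\mathbf{w}) - \alpha \mathcal{J}(\mathbf{p}) + \langle \mathbf{p}-\mathbf{u}, \mathbf{w}-\mathbf{p}\rangle_\mathcal{G} \geq 0$, followed by symmetrization and Cauchy--Schwarz, is the standard one and what Moreau's original treatment amounts to. Two small remarks. First, the strong-convexity inequality $f(\mathbf{w}) \geq f(\mathbf{p}) + \tfrac12\norm{\mathbf{w}-\mathbf{p}}^2$ at a minimizer $\mathbf{p}$ is used without proof; it follows from the definition of strong convexity by taking $\lambda \to 0$ in the modulus-1 convexity bound applied to the segment from $\mathbf{p}$ to $\mathbf{w}$, and is worth flagging as the one nontrivial input. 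Second, your intermediate estimate $\langle \mathbf{u}-\mathbf{v}, \mathbf{p}-\mathbf{q}\rangle_\mathcal{G} \geq \norm{\mathbf{p}-\mathbf{q}}_\mathcal{G}^2$ is actually \emph{firm} non-expansiveness, which is strictly stronger than the non-expansiveness claimed in the lemma; this is a bonus but does not affect correctness.
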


Soft thresholding of a sequence $\bv$ by applying $s_\alpha$ to each entry, $s_\alpha(\bv) := \bigl(s_\alpha(v_i)\bigr)_{i\in\N}$, can be characterized as the proximity operator of the functional $\norm{\cdot}_{\ell_1}$, see e.g.\ \cite{Daubechiesetal:08}, and is therefore a non-expansive mapping on $\ell_2$.

An analogous characterization is still possible when soft thresholding is applied to the singular values of matrices or operators, which provides a reduction to lower matrix ranks.
More precisely, the soft thresholding operation $S_\alpha$ for matrices is defined as follows: for a given matrix $\mathbf{X}$ with singular value decomposition 
$$ \mathbf{X} = \mathbf{U} \diag \bigl( {\sigma}_i (\mathbf{X})\bigr) \mathbf{V}^T  \,,  $$
where ${\sigma}_i (\mathbf{X})$ denotes the $i$-th singular value of $\mathbf{X}$,
we set
$$ S_{\alpha} (\mathbf{X})  : =  \mathbf{U} \diag \bigl(s_{\alpha} ({\sigma}_i(\mathbf{X}) ) \bigr)  \mathbf{V}^T \,.
$$
Note that application of the hard thresholding $h_\alpha$ to the singular values would instead correspond to a rank truncation of the singular value decomposition.
For Hilbert-Schmidt operators $\mathbf{X}$ one can define $S_\alpha(\mathbf{X})$ analogously.

The mapping $S_\alpha$ is the proximity operator for the \emph{nuclear norm},
$$   \norm{\mathbf{X}}_* :=   \norm{\sigma(\mathbf{X})}_{\ell_1} =  \sum_{i \geq 1} \sigma_i(\mathbf{X})\,.   $$

\begin{lemma} \label{lem:soft}
Let $\cS_1, \cS_2$ be separable Hilbert spaces, $\mathbf{X} \in \HS(\cS_1, \cS_2)$,  and $\alpha \geq 0$. Then
$$ \prox^\alpha_{\norm{\cdot}_*} (\mathbf{X} ) =  S_{\alpha }  (\mathbf{X})   \,, $$
or in other words, 
  $$ S_{\alpha}  (\mathbf{X})  
  =  \argmin_{\mathbf{V} \in \HS} \Bigl\{ \alpha \| \mathbf{V}\|_{*}  +  \frac{1}{2} \|  \mathbf{X} - \mathbf{V} \|^2_\HS \Bigr\}   \,.$$
\end{lemma}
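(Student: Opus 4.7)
The strategy is to reduce the operator optimization over $\HS(\cS_1,\cS_2)$ to an elementwise scalar problem on the singular values, using Mirsky's inequality (Theorem \ref{thm:mirsky}) as the bridge between the two. The proof then consists of deriving a singular-value lower bound for the Moreau functional, minimizing this bound coordinatewise, and exhibiting $S_\alpha(\mathbf{X})$ as the operator that saturates the resulting inequality.

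First I would note that the objective $F(\mathbf{V}) := \alpha\norm{\mathbf{V}}_* + \tfrac{1}{2}\norm{\mathbf{X}-\mathbf{V}}^2_\HS$ is proper, strictly convex, lower semicontinuous, and coercive on $\HS(\cS_1,\cS_2)$, hence admits a unique minimizer by standard convex-analytic arguments. Next, let $\sigma$ and $\tau$ denote the singular-value sequences of $\mathbf{X}$ and $\mathbf{V}$. By definition $\norm{\mathbf{V}}_* = \norm{\tau}_{\ell_1}$, and Theorem \ref{thm:mirsky} yields $\norm{\sigma-\tau}^2_{\ell_2} \leq \norm{\mathbf{X}-\mathbf{V}}^2_\HS$. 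Consequently,
$$ F(\mathbf{V}) \;\geq\; \alpha\norm{\tau}_{\ell_1} + \tfrac{1}{2}\norm{\sigma-\tau}^2_{\ell_2} \;\geq\; \sum_{i\in\N}\min_{t\geq 0}\bigl[\alpha t + \tfrac{1}{2}(\sigma_i-t)^2\bigr]. $$
The inner scalar problem is elementary: its unique minimizer over $t\geq 0$ is $t^* = \max\{\sigma_i-\alpha,0\} = s_\alpha(\sigma_i)$, so the lower bound evaluates to $\alpha\norm{s_\alpha(\sigma)}_{\ell_1} + \tfrac{1}{2}\norm{\sigma - s_\alpha(\sigma)}^2_{\ell_2}$.

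It then remains to check that $\mathbf{V}_* := S_\alpha(\mathbf{X})$ attains this lower bound. Writing an SVD $\mathbf{X} = \bU\diag(\sigma)\bV^*$, one has $S_\alpha(\mathbf{X}) = \bU\diag(s_\alpha(\sigma))\bV^*$ by definition, so its singular values are $s_\alpha(\sigma)$ (which is still nonincreasing because $\sigma$ is and $s_\alpha$ is monotone on $[0,\infty)$), giving $\norm{S_\alpha(\mathbf{X})}_* = \norm{s_\alpha(\sigma)}_{\ell_1}$. Moreover $\mathbf{X} - S_\alpha(\mathbf{X}) = \bU\diag(\sigma - s_\alpha(\sigma))\bV^*$, so its Hilbert-Schmidt norm is exactly $\norm{\sigma - s_\alpha(\sigma)}_{\ell_2}$. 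Thus $F(S_\alpha(\mathbf{X}))$ meets the lower bound, and by uniqueness of the minimizer we conclude $\prox^\alpha_{\norm{\cdot}_*}(\mathbf{X}) = S_\alpha(\mathbf{X})$.

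The main (mild) obstacle is the infinite-dimensional aspect: one must verify that the entrywise-optimal sequence $s_\alpha(\sigma)$ lies in $\ell_2$ and is realized as the singular values of some $\HS$ operator. The first is immediate since $s_\alpha$ is $1$-Lipschitz with $s_\alpha(0)=0$, whence $\norm{s_\alpha(\sigma)}_{\ell_2}\leq\norm{\sigma}_{\ell_2}<\infty$; the second is witnessed by $S_\alpha(\mathbf{X})$ itself, constructed from the SVD of $\mathbf{X}$. Apart from this verification, the argument is a direct transcription of the matrix proof (as in \cite{Ma:11}) enabled by the $\HS$ version of Mirsky's theorem already recorded above.
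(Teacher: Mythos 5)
Your proof is correct and gives a complete, self-contained argument, but the route is genuinely different from the paper's. The paper merely cites the finite-matrix result from \cite{CaiCandesShen:08} and sketches an extension to Hilbert--Schmidt operators by finite-dimensional approximation, invoking Theorem \ref{thm:mirsky} together with lower semicontinuity of the $\ell_1$-norm under componentwise convergence (Fatou). You instead transplant the \emph{argument itself} into the infinite-dimensional setting: you obtain a $\mathbf{V}$-independent lower bound on $F(\mathbf{V}) = \alpha\norm{\mathbf{V}}_* + \tfrac12\norm{\mathbf{X}-\mathbf{V}}_\HS^2$ via Mirsky's inequality, minimize it coordinatewise on the singular values, and check that $S_\alpha(\mathbf{X})$ attains it with equality (because it shares the singular vectors of $\mathbf{X}$, so Mirsky is saturated), concluding by strict convexity. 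What this buys is that you never need the matrix-case result as a black box, and you sidestep the approximation/Fatou machinery entirely --- you only need Mirsky once and the elementary scalar prox of $t\mapsto\alpha t$ on $[0,\infty)$. A small bookkeeping point worth making explicit: in the passage from $\alpha\norm{\tau}_{\ell_1} + \tfrac12\norm{\sigma-\tau}_{\ell_2}^2$ to the coordinatewise sum of minima, one should note that if $\norm{\tau}_{\ell_1}=\infty$ the inequality is vacuous, and otherwise each summand is nonnegative so the term-by-term comparison is licit; you implicitly use this but it deserves a word. Also, as you correctly observe, $s_\alpha(\sigma)$ is again nonincreasing, which is what guarantees that it genuinely \emph{is} the singular value sequence of $S_\alpha(\mathbf{X})$ and hence that Mirsky applies with equality in the verification step.
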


Note that this statement is shown for finite matrices $\mathbf{X}$, e.g., in \cite{CaiCandesShen:08}; the generalization to Hilbert-Schmidt operators can be obtained by finite-dimensional approximation, using Theorem \ref{thm:mirsky} and that the $\ell_1$-norm is lower semicontinuous with respect to componentwise convergence of sequences, which follows from Fatou's lemma.

\section{Soft  Thresholding of Hierarchical Tensors}\label{sec:hierarchst}

In this section, we construct a non-expansive soft thresholding operation for the rank reduction of hierarchical tensors.
By $S_{t,{\alpha} }: \mathcal{H} \to \mathcal{H}$  we denote soft thresholding applied to the matricization $\mathcal{M}_t(\cdot)$ of the input,
\[   S_{t,\alpha} (\bu) := \mathcal{M}^{-1}_t \circ S_{\alpha} \circ \mathcal{M}_t(\bu)  \,.  \]
The complete soft shrinkage operator $ \mathbf{S}_{{\alpha}} : \Hcal \to \Hcal $ is then given as the successive application of this operation to each matricization, that is,
\begin{equation}
\label{Salphadef}
  \mathbf{S}_{{\alpha}  } (\mathbf{u} )  : = S_{E, \alpha  } \circ \ldots \circ S_{1, \alpha } ( \mathbf{u} )   \ . 
\end{equation}
Clearly, the result of applying $\mathbf{S}_\alpha$ for $\alpha > 0$ always has finite (but not a priori fixed) hierarchical ranks. 
 
For a hierarchical tensor $\mathbf{u}$ with suitably numbered edges $t = 1, \ldots, E$, 
the soft thresholding $ \mathbf{S}_{{\alpha}} (\mathbf{u}) $ can be obtained as follows: starting with $\bu_0 = \bu$, for each $t$, we first rearrange $\bu_{t-1}$ such that the root element is on edge $t$ with a singular value decomposition of $\mathcal{M}_t(\bu_{t-1})$, which exposes the singular values $\sigma_t(\bu_{t-1})$ and thus allows the direct application of $S_{t,\alpha}$ to obtain $\bu_t := S_{t,\alpha}(\bu_{t-1})$, and finally $\bS_\alpha(\bu) = \bu_E$. An example of an order in which the edges in $\mathbb{E}$ can be visited for this procedure in the case $d=4$ is given in Figure \ref{htequiv}.

\begin{remark}
Assuming that we are given a hierarchical tensor with $\dim \mathcal{H}_i \leq n \in \N$ and representation ranks bounded by $r$, then the first step of bringing this tensor into an initial HSVD representation takes $\mathcal{O}(d r^4 + dr^2n)$ operations. Moving the root element from one edge to an adjacent one costs $\mathcal{O}(r^4 + r^2 n)$ operations; if an appropriate ordering of the edges is used, the total complexity for applying $\bS_\alpha$ can thus be seen to be bounded by $\mathcal{O}(d r^4 + dr^2n)$ as well.
\end{remark}
  
\begin{proposition}\label{Salphanonexp}
For any $\bu, \bv\in\Hcal$ and $\alpha>0$, the operator $\mathbf{S}_\alpha$ defined in \eqref{Salphadef} satisfies $\norm{\mathbf{S}_\alpha(\bu) - \mathbf{S}_\alpha(\bv)} \leq \norm{\bu - \bv}$.
\end{proposition}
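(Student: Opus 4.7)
The plan is to reduce the statement to the non-expansiveness of the single-edge operation $S_{t,\alpha}$ and then invoke the fact that a composition of non-expansive maps is non-expansive. Concretely, writing $\mathbf{S}_\alpha = S_{E,\alpha} \circ \cdots \circ S_{1,\alpha}$, once I show $\norm{S_{t,\alpha}(\bu) - S_{t,\alpha}(\bv)} \leq \norm{\bu - \bv}$ for each $t$ and all $\bu,\bv \in \Hcal$, a routine induction over $t = 1, \ldots, E$ gives the claim.

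For the single-edge step, the key observation is that by \eqref{matricizationnorm} together with the linearity $\mathcal{M}_t(\bu) - \mathcal{M}_t(\bv) = \mathcal{M}_t(\bu - \bv)$, the matricization $\mathcal{M}_t$ is an isometry from $\Hcal$ onto its image in $\HS\bigl(\bigotimes_{i \in [n_t]} \Hcal_i,\, \bigotimes_{i \in n_t} \Hcal_i \bigr)$, with isometric inverse $\mathcal{M}_t^{-1}$ on that image. Hence, using the definition $S_{t,\alpha} = \mathcal{M}_t^{-1} \circ S_\alpha \circ \mathcal{M}_t$,
\[
   \norm{S_{t,\alpha}(\bu) - S_{t,\alpha}(\bv)} = \bignorm{ S_\alpha(\mathcal{M}_t \bu) - S_\alpha(\mathcal{M}_t \bv) }_{\HS}.
\]
By Lemma \ref{lem:soft}, $S_\alpha$ on Hilbert-Schmidt operators coincides with the proximity operator $\prox^\alpha_{\norm{\cdot}_*}$ of the nuclear norm, and Lemma \ref{nonexp} then yields
\[
   \bignorm{ S_\alpha(\mathcal{M}_t \bu) - S_\alpha(\mathcal{M}_t \bv) }_{\HS} \leq \norm{\mathcal{M}_t \bu - \mathcal{M}_t \bv}_{\HS} = \norm{\bu - \bv},
\]
which is the required single-edge estimate.

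I expect no real obstacle here: the proof is essentially bookkeeping, and the only point that needs a brief justification is that $\mathcal{M}_t$, although defined on $\Hcal$ and mapping into a Hilbert-Schmidt space, truly preserves distances (so that one can freely switch between $\norm{\cdot}$ on $\Hcal$ and $\norm{\cdot}_\HS$ after applying $\mathcal{M}_t$). Once that is noted, chaining the $E$ non-expansive steps gives the result immediately.
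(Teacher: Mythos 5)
Your proof is correct and follows exactly the same route the paper takes: you reduce to a single edge by the isometry \eqref{matricizationnorm}, identify $S_\alpha$ with the proximity operator of the nuclear norm via Lemma~\ref{lem:soft}, invoke Lemma~\ref{nonexp} for non-expansiveness, and chain the $E$ factors. The paper states this in one sentence; your write-up simply fills in the bookkeeping.
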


\begin{proof}
The statement follows by repeated application of Lemmata \ref{nonexp}, \ref{lem:soft} and \eqref{matricizationnorm}.
\end{proof}

The following lemma guarantees that applying soft thresholding to a certain matricization of a tensor does not increase the hierarchical singular values of any other matricization of this tensor.

\begin{lemma}\label{monotonicity}
For any $\bv\in\Hcal$ and for $t, s = 1, \ldots, E$, one has $\sigma_{t,i}( \bu) \geq \sigma_{t,i}( S_{s,\alpha} (\bu))$ for all $i\in\N$ and any $\alpha \geq 0$.
\end{lemma}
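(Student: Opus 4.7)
The plan is to factor $S_{s,\alpha}$ so that, when viewed on the matricization $\mathcal{M}_t$, it becomes a one-sided multiplication by a contraction; the desired bound then follows from the Courant--Fischer characterization of singular values. Writing the SVD $\mathcal{M}_s(\bu) = \sum_k \sigma_{s,k}(\bu)\, \mathbf{u}_k \mathbf{v}_k^*$ with orthonormal left singular vectors $\mathbf{u}_k \in \bigotimes_{i \in n_s} \Hcal_i$ and right singular vectors $\mathbf{v}_k \in \bigotimes_{i \in [n_s]} \Hcal_i$, I would set $c_k := s_\alpha(\sigma_{s,k}(\bu))/\sigma_{s,k}(\bu) \in [0,1]$ when $\sigma_{s,k}(\bu) > 0$ and $c_k := 0$ otherwise, and define two positive-semidefinite contractions
\[
\pi \;:=\; \sum_k c_k\, \mathbf{u}_k \mathbf{u}_k^* \text{ on }  \bigotimes_{i\in n_s}\Hcal_i,\qquad P \;:=\; \sum_k c_k\, \mathbf{v}_k \mathbf{v}_k^* \text{ on } \bigotimes_{i\in [n_s]}\Hcal_i.
\]
A direct computation from the SVD yields the two equivalent representations $S_{s,\alpha}(\bu) = (\pi \otimes \id)\bu = (\id \otimes P)\bu$, where in the first $\pi$ acts on the $n_s$-modes of $\bu$ and in the second $P$ acts on the $[n_s]$-modes. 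This \emph{two-sided} factorization is the key structural input supplied by soft thresholding.

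Next, I would use that $\mathbb{T}$ is a binary dimension tree, so that the subsets $n_s$ and $n_t$ of $\{1,\ldots,d\}$ are either nested or disjoint. The case $s=t$ is immediate from $s_\alpha(x)\le x$ for $x\ge 0$ and the definition of $S_{s,\alpha}$. If $n_s \subsetneq n_t$, then $\bigotimes_{i\in n_s}\Hcal_i$ is a tensor factor of $\bigotimes_{i\in n_t}\Hcal_i$, so $\pi$ extends by $\otimes\,\id$ to a contraction on $\bigotimes_{i\in n_t}\Hcal_i$, and one checks that $\mathcal{M}_t(S_{s,\alpha}(\bu)) = (\pi\otimes\id)\,\mathcal{M}_t(\bu)$, i.e.\ a left multiplication of $\mathcal{M}_t(\bu)$ by a contraction. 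Symmetrically, if $n_t \subsetneq n_s$, then $P$ extends to a contraction on $\bigotimes_{i\in [n_t]}\Hcal_i$, producing a right multiplication of $\mathcal{M}_t(\bu)$. Finally, if $n_s \cap n_t = \emptyset$ then $n_s \subset [n_t]$, and $\pi$ again extends to a contraction on $\bigotimes_{i\in [n_t]}\Hcal_i$, yielding a right multiplication of $\mathcal{M}_t(\bu)$ by a contraction.

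In every case, $\mathcal{M}_t(S_{s,\alpha}(\bu))$ is obtained from $\mathcal{M}_t(\bu)$ by multiplication on one side by an operator of operator norm at most~$1$, and the min-max characterization of singular values therefore delivers $\sigma_{t,i}(S_{s,\alpha}(\bu)) \le \sigma_{t,i}(\bu)$ for every $i$. The main obstacle I expect is securing the two-sided representation in the first step: a generic nonexpansive map on $\Hcal$ need not enjoy this kind of cross-edge monotonicity. It is essential that soft thresholding, being built from the SVD of $\mathcal{M}_s(\bu)$, produces positive-semidefinite factors aligned with \emph{both} the left and right singular subspaces of $\mathcal{M}_s(\bu)$ simultaneously, so that the laminar structure of $\mathbb{T}$ always allows one of these factors to be extended to a one-sided contraction compatible with the partition $(n_t,[n_t])$.
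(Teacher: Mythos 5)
Your proof is correct, and it takes a genuinely more explicit route than the paper. The paper's proof simply rearranges the tensor so that edge $s$ holds the root and then cites Hackbusch (Theorem 11.61, part 3) and K\"uhn (Theorem 7.18) for the fact that decreasing singular values at the root cannot increase singular values of other matricizations. You instead give a self-contained argument: the two-sided factorization $S_{s,\alpha}(\bu)=(\pi\otimes\id)\bu=(\id\otimes P)\bu$ with $\pi$, $P$ positive-semidefinite contractions aligned with the left and right singular subspaces of $\mathcal{M}_s(\bu)$, combined with the laminar structure of the dimension tree (any two tree nodes $n_s, n_t$ are nested or disjoint), reduces the claim in every case to a one-sided multiplication of $\mathcal{M}_t(\bu)$ by a contraction, from which the singular value inequality follows by Courant--Fischer. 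This is in essence the mechanism underlying the cited references, but you isolate exactly the structural property of soft thresholding (simultaneous compatibility with both singular subspaces) that makes the cross-edge monotonicity work, which the paper leaves implicit. One small point worth making explicit: since each edge is an unordered pair $\{n,[n]\}$ with at least one of the two sets a tree node, you should take the representatives $n_s, n_t$ to be actual nodes of $\mathbb{T}$ so that laminarity applies; with that convention the four cases you list are exhaustive, and in the two ``right-multiplication'' cases it is $[n_s]\subsetneq[n_t]$ (respectively $n_s\subset[n_t]$) that permits the extension of $P$ (respectively $\pi$) to a contraction on $\bigotimes_{i\in[n_t]}\Hcal_i$.
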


\begin{proof}
Note that for the action of $S_{s,\alpha}$, the tensor is rearranged such that the edge $s$ holds the root element. Thus the statement follows exactly as in part 3 of the proof of Theorem 11.61 in \cite{Hackbusch:12}, see also the proof of Theorem 7.18 in \cite{Kuehn:12}; there it is shown that when singular values are decreased at the root element, this cannot cause any singular value of any other matricization to increase.
\end{proof}

Using the above lemma, the error incurred by application of $\bS_\alpha$ to a tensor $\bu$ can be estimated in terms of the sequences of hierarchical singular values $\sigma_t(\bu)$, $t=1,\ldots, E$.

\begin{lemma}\label{lmm:st}
For any $\bu \in \Hcal$ and $r\in\N_0$, let
\[  \tau_{t,r}(\bu) := \inf_{\rank_t(\bw) \leq r} \norm{\bu - \bw} =  \Bigl(  \sum_{ k > r } \abs{\sigma_{t,k}(\bu)}^2  \Bigr)^{\frac12} \,.   \]
Furthermore, for any $\delta>0$ we define
\[  r_{t,\delta}(\bu) :=   \max  \bigl\{  r\in\N \colon \sigma_{t,r} > \delta \bigr\}  \cup \{ 0 \}   \,. \]
Then for any given $\alpha>0$,
\begin{equation}\label{Salphaapprox}
   \max_{t=1,\ldots,E} d^\alpha_t(\bu)  \leq \norm{ \mathbf{S}_\alpha(\bu) -  \bu}  \leq   \sum_{t=1}^E d^\alpha_t(\bu)  \,,    
\end{equation}
where, with $\tau_{t,\alpha}(\bu):=\tau_{t,r_{t,\alpha}(\bu)}(\bu) $,
\[   d_t^\alpha (\bu) := \bignorm{ \sigma_t \bigl(S_{t,\alpha}(\bu) \bigr) - \sigma_t(\bu) } = \sqrt{ \alpha^2 r_{t,\alpha}(\bu) +  \bigl(\tau_{t,\alpha}(\bu)\bigr)^2 }   \,. \]
\end{lemma}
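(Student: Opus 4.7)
The plan is to break the claim into three pieces: first, the explicit formula for $d_t^\alpha(\bu)$; second, the upper bound by telescoping; third, the lower bound via Mirsky's theorem combined with the monotonicity of hierarchical singular values.

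For the formula, I would note that the singular values of the $t$-matricization of $S_{t,\alpha}(\bu)$ are simply $s_\alpha(\sigma_{t,k}(\bu))$. Since $|\sigma_{t,k}(\bu) - s_\alpha(\sigma_{t,k}(\bu))| = \min(\sigma_{t,k}(\bu), \alpha)$, splitting the $\ell_2$-sum according to whether $\sigma_{t,k}(\bu) > \alpha$ immediately yields
\[
  d_t^\alpha(\bu)^2 = \sum_k \min(\sigma_{t,k}(\bu), \alpha)^2 = \alpha^2 r_{t,\alpha}(\bu) + \tau_{t,\alpha}(\bu)^2.
\]

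For the upper bound, I would set $\bu_0 := \bu$ and $\bu_s := S_{s,\alpha}(\bu_{s-1})$, so that $\bS_\alpha(\bu) = \bu_E$, and telescope with the triangle inequality. Each term $\norm{\bu_t - \bu_{t-1}}$ equals $\norm{S_\alpha(\mathcal{M}_t(\bu_{t-1})) - \mathcal{M}_t(\bu_{t-1})}_\HS = \bignorm{\min(\sigma_t(\bu_{t-1}), \alpha)}_{\ell_2}$ by \eqref{matricizationnorm}. Iterating Lemma \ref{monotonicity} over $s = 1, \ldots, t-1$ shows $\sigma_{t,k}(\bu_{t-1}) \leq \sigma_{t,k}(\bu)$ for all $k$; since $x \mapsto \min(x, \alpha)$ is non-decreasing on $[0,\infty)$, this bounds the telescoped sum term-by-term by $\sum_t d_t^\alpha(\bu)$.

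For the lower bound, I fix an arbitrary $t$ and apply Theorem \ref{thm:mirsky} to $\mathcal{M}_t$ to obtain $\norm{\bS_\alpha(\bu) - \bu} \geq \norm{\sigma_t(\bS_\alpha(\bu)) - \sigma_t(\bu)}_{\ell_2}$. The key step, which I expect to be the main obstacle, is showing $\sigma_{t,k}(\bu) - \sigma_{t,k}(\bS_\alpha(\bu)) \geq \min(\sigma_{t,k}(\bu), \alpha)$ pointwise in $k$; summing squares then yields the lower bound by $d_t^\alpha(\bu)$. This pointwise estimate requires a two-stage use of Lemma \ref{monotonicity}: on one hand, $\sigma_{t,k}(\bu_{t-1}) \leq \sigma_{t,k}(\bu)$, which together with the monotonicity of $s_\alpha$ on $[0,\infty)$ gives $\sigma_{t,k}(\bu_t) = s_\alpha(\sigma_{t,k}(\bu_{t-1})) \leq s_\alpha(\sigma_{t,k}(\bu))$; on the other, applying Lemma \ref{monotonicity} to the remaining operations $S_{t+1,\alpha}, \ldots, S_{E,\alpha}$ gives $\sigma_{t,k}(\bS_\alpha(\bu)) \leq \sigma_{t,k}(\bu_t)$. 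Chaining these yields $\sigma_{t,k}(\bS_\alpha(\bu)) \leq s_\alpha(\sigma_{t,k}(\bu))$, and subtracting from $\sigma_{t,k}(\bu)$ produces the needed inequality. Taking the maximum over $t$ concludes.
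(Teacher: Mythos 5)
Your proposal is correct and follows the same route as the paper's proof: telescoping with Lemma \ref{monotonicity} for the upper bound, and Mirsky's theorem combined with the pointwise singular-value comparison from Lemma \ref{monotonicity} for the lower bound. You spell out the two-stage chaining $\sigma_{t,k}(\bS_\alpha(\bu)) \leq s_\alpha(\sigma_{t,k}(\bu_{t-1})) \leq s_\alpha(\sigma_{t,k}(\bu))$ more explicitly than the paper does, but the argument is identical in substance.
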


\begin{remark}
It can be seen that the upper estimate in \eqref{Salphaapprox} is generally sharp by choosing $\bu$ as a tensor of rank one (that is, with all hierarchical ranks equal to one) with $\norm{\bu} = E\alpha$. In this case, $\mathbf{S}_\alpha(\bu) = 0$.
\end{remark}

\begin{proof}[Proof of Lemma \ref{lmm:st}]
We first show the second inequality in \eqref{Salphaapprox}. Let
\[   \bv_1 := \bu,\qquad  \bv_t := S_{t-1,\alpha} \circ \ldots\circ S_{1, \alpha}(\bu) , \quad t \geq 2 \,.  \]
By a telescoping sum argument, applying the soft thresholding error estimate to each individual application of $S_{\alpha,t}$, we obtain
$$ 
   \norm{ \mathbf{S}_\alpha(\bu) -  \bu}   \leq  \sum_{t=1}^E  \norm{S_{t,\alpha}(\bv_t) - \bv_t}  \leq   \sum_{t=1}^E d^\alpha_t (\bv_t )  \,.
$$
It remains to show that $ d^\alpha_t(\bv_t ) \leq d^\alpha_t(\bu) $.
This follows from Lemma \ref{monotonicity}, whose repeated application gives $\sigma_{t,i}( \bu) \geq \sigma_{t,i}( S_{1,\alpha}(\bu)) \geq \sigma_{t,i}(S_{2,\alpha} \circ S_{1,\alpha}(\bu)) \geq \ldots$ for each $t$.

To show the first inequality in \eqref{Salphaapprox}, we again invoke Lemma \ref{monotonicity}, in this case to infer that for each $t$,
\[    \sum_{i\geq 1} \abs{\sigma_{t,i}(\bS_\alpha(\bu))  - \sigma_{t,i}(\bu) }^2  \geq \sum_{i\geq 1}  \abs{\sigma_{t,i}(S_{t,\alpha}(\bu))  - \sigma_{t,i}(\bu) }^2 = \bigl( d_t^\alpha(\bu) \bigr)^2  \,. \]
Moreover, by Theorem \ref{thm:mirsky}, we have
\[  \norm{\sigma_{t}(\bS_\alpha(\bu))  - \sigma_{t}(\bu)} \leq \norm{\mathcal{M}_t(\bS_\alpha(\bu)) - \mathcal{M}_t(\bu)}_\HS  =  \norm{\bS_\alpha(\bu) - \bu} \,, \]
and taking the maximum over $t$ concludes the proof.
\end{proof}

\begin{remark}\label{salphaconv}
Let $\bu \in \Hcal$, then $ \sigma_t(\bu) \in \ell_2$, which implies that $d^\alpha_t(\bu) \to 0$ as $\alpha \to 0$. 
Without further assumptions, however, this convergence can be arbitrarily slow.

\begin{enumerate}[{\rm (a)}]
\item If in addition $\sigma_{t}(\bu) \in \weakl{p}$ for a $p  \in (0,2)$ and for each $t$, we have 
\[   r_{t,\alpha} \lesssim \abs{\sigma_t(\bu)}_{\weakl{p}}^p \, \alpha^{-p} \,,\quad  \tau_{t,\alpha} \lesssim  \abs{\sigma_t(\bu)}^{p/2}_{\weakl{p}} \, \alpha^{1 - p /2 } \,, \]
see \cite{Devore:98}, and thus
\[   \norm{ \mathbf{S}_\alpha(\bu) -  \bu} \lesssim E\, \max_t \abs{\sigma_t(\bu)}^{p/2}_{\weakl{p}}  \, \alpha^{1-p/2} .   \]

\item If $\sigma_{t,k}(\bu) \leq C e^{- c k^\beta}$, $k\in\N$, with $C,c , \beta> 0$, then arguing similarly as in \cite[Section 7.4]{Devore:98}, we obtain
\[   r_{t,\alpha} \leq \bigl( c^{-1} \ln (C\alpha^{-1}) \bigr)^{\frac1\beta} \lesssim (1 + \abs{\ln\alpha})^{\frac1\beta}\,, \quad \tau_{t,\alpha} \lesssim (1 + \abs{\ln\alpha})^{\frac1{2\beta}} \,\alpha \,,  \]
and therefore
\begin{equation}\label{salphaconvexp}
    \norm{ \mathbf{S}_\alpha(\bu) -  \bu}   \lesssim  E\, (1 + \abs{\ln\alpha})^{\frac{1}{2\beta}}   \,  \alpha  \,.  
\end{equation}
\end{enumerate}
\end{remark}

\begin{remark}
It is well known that soft thresholding is closely related to convex optimization by proximal operator techniques. 
Note that the soft thresholding for hierarchical tensors can be written as
\begin{equation}
\mathbf{S}_{\alpha } =  S_{E, \alpha } \circ \cdots \circ S_{1, \alpha}      = 
\prox^{\alpha}_{J_E } \circ \cdots \circ \prox^{\alpha}_{J_1 }
\end{equation}
with $J_t := \norm{\mathcal{M}_t( \cdot) }_*$ for $t=1,\ldots,E$.
Thus in our setting, we do not have a characterisation of $\bS_\alpha$ by a single convex optimisation problem (as provided for $S_\alpha$ by Lemma \ref{lem:soft}), but still by a nested \emph{sequence} of convex optimization problems: one has $\bS_\alpha(\bu) = \tilde \bu_E$, where
\[   \tilde\bu_t := \argmin_{\bv \in \Hcal} \Bigl\{ J_t(\bv) + \frac1{2\alpha} \norm{\tilde\bu_{t-1} - \bv}^2 \Bigr\}  \,, \quad t = 1,\ldots, E\,,   \]
with $\tilde\bu_0 := \bu$.
\end{remark}

\section{Fixed-Point Iterations with Soft Thresholding}\label{sec:fpst}

In this section, we consider the combination of $\bS_\alpha$ with an arbitrary convergent fixed point iteration with a contractive mapping $\cF \colon  \cH \to \cH$, that is, there exists $\rho \in (0,1)$ such that
\begin{equation}\label{fpmapping}    \norm{\cF(\bv) - \cF(\bw)} \leq \rho \norm{\bv - \bw} \,,\quad \bv, \bw \in \cH \,.  \end{equation}
In the example of a linear operator equation $\Acal \bu = \mathbf{f}$ with elliptic $\Acal$, we may choose $\cF(\bv) = \bv - \mu (\Acal \bv - \mathbf{f})$ with a suitable scaling parameter $\mu >0$. A practical scheme for this particular case will be considered in detail in Section \ref{sec:aposteriori}.

Since $\bS_\alpha$ is non-expansive, the mapping $\bS_\alpha \circ \cF$ still yields a convergent fixed point iteration, but with a modified fixed point. 

\begin{lemma}\label{ualphaestimate}
Assuming \eqref{fpmapping}, let $\mathbf{u}^*$ be the unique fixed point of $\cF$.
Then for any $\alpha>0$, there exists a uniquely determined $\mathbf{u}^\alpha$ such that $\mathbf{u}^\alpha = \mathbf{S}_\alpha \bigl( \cF(\bu^\alpha) \bigr)$, which satisfies
\begin{equation}
\label{fpdist}
 ( 1+ \rho)^{-1} \norm{ \mathbf{S}_\alpha(\bu^*)-\bu^*}   \leq 
  \norm{ \mathbf{u}^\alpha  - \mathbf{u}^* } \leq (1-\rho)^{-1} \norm{ \mathbf{S}_\alpha(\bu^*)-\bu^*}   \,.
\end{equation}
Moreover, for any given $\mathbf{u}_0$, for $\mathbf{u}_{k+1} := \mathbf{S}_\alpha \bigl( \cF(\bu_k) \bigr)$ one has
$$  \norm{\mathbf{u}_k - \mathbf{u}^\alpha} \leq \rho^k \norm{\mathbf{u}_0 - \mathbf{u}^\alpha}  \,. $$
\end{lemma}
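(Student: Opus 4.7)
The plan is to observe that $\bS_\alpha \circ \cF$ is itself a $\rho$-contraction and then apply Banach's fixed point theorem, obtaining both existence/uniqueness of $\bu^\alpha$ and the geometric convergence bound for free. The two-sided estimate on $\norm{\bu^\alpha - \bu^*}$ then follows from standard ``triangle inequality'' perturbation arguments using that $\bu^*$ is a fixed point of $\cF$ (but not of $\bS_\alpha \circ \cF$).

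First I would verify contractivity of $\bS_\alpha \circ \cF$: for any $\bv,\bw \in \cH$, Proposition \ref{Salphanonexp} gives $\norm{\bS_\alpha(\cF(\bv)) - \bS_\alpha(\cF(\bw))} \leq \norm{\cF(\bv) - \cF(\bw)} \leq \rho\norm{\bv - \bw}$. Since $\cH$ is complete and $\rho < 1$, Banach's fixed point theorem produces a unique $\bu^\alpha \in \cH$ with $\bu^\alpha = \bS_\alpha(\cF(\bu^\alpha))$, and moreover any iteration $\bu_{k+1} = \bS_\alpha(\cF(\bu_k))$ satisfies $\norm{\bu_{k+1} - \bu^\alpha} \leq \rho\norm{\bu_k - \bu^\alpha}$, which gives the geometric convergence $\norm{\bu_k - \bu^\alpha} \leq \rho^k \norm{\bu_0 - \bu^\alpha}$ by induction.

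For the upper bound in \eqref{fpdist}, I would insert $\bS_\alpha(\cF(\bu^*)) = \bS_\alpha(\bu^*)$ as an intermediate term and use the triangle inequality together with non-expansiveness of $\bS_\alpha$ and contractivity of $\cF$:
\[
\norm{\bu^\alpha - \bu^*} \leq \norm{\bS_\alpha(\cF(\bu^\alpha)) - \bS_\alpha(\cF(\bu^*))} + \norm{\bS_\alpha(\bu^*) - \bu^*} \leq \rho\norm{\bu^\alpha - \bu^*} + \norm{\bS_\alpha(\bu^*) - \bu^*},
\]
which upon rearrangement yields the factor $(1-\rho)^{-1}$. For the lower bound, I would run essentially the same argument in reverse, starting from $\norm{\bS_\alpha(\bu^*) - \bu^*} = \norm{\bS_\alpha(\cF(\bu^*)) - \cF(\bu^*)}$, inserting $\bu^\alpha = \bS_\alpha(\cF(\bu^\alpha))$, and bounding
\[
\norm{\bS_\alpha(\bu^*) - \bu^*} \leq \norm{\bS_\alpha(\cF(\bu^*)) - \bS_\alpha(\cF(\bu^\alpha))} + \norm{\bu^\alpha - \cF(\bu^*)} \leq (\rho+1)\norm{\bu^\alpha - \bu^*}.
\]

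There is no real obstacle here — the entire argument is an application of the Banach fixed point theorem plus two triangle-inequality manipulations. The only point that requires a moment of care is remembering to use $\cF(\bu^*) = \bu^*$ in the correct place to get the ``free'' $\bS_\alpha(\bu^*) - \bu^*$ term to appear, and using $\bu^\alpha = \bS_\alpha(\cF(\bu^\alpha))$ symmetrically for the lower bound.
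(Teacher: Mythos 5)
Your proof is correct and follows essentially the same route as the paper: contractivity of $\bS_\alpha \circ \cF$ via non-expansiveness plus the Banach fixed point theorem, followed by triangle-inequality manipulations that exploit $\cF(\bu^*) = \bu^*$ and $\bu^\alpha = \bS_\alpha(\cF(\bu^\alpha))$ to isolate $\norm{\bS_\alpha(\bu^*) - \bu^*}$. The paper phrases the same algebra via the identity $\bu^\alpha - \bu^* = \cG(\bu^*) - \bu^* + \mathbf{e}^\alpha$ with $\mathbf{e}^\alpha := \cG(\bu^\alpha) - \cG(\bu^*)$, but the underlying estimates are identical.
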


\begin{proof}
By the non-expansiveness of $\mathbf{S}_\alpha$, the operator 
$\cG := \mathbf{S}_\alpha \circ \cF$
is a contraction.
The existence and uniqueness of $\bu^\alpha$, as well as the stated properties of the iteration, thus follow from the Banach fixed point theorem. 
Let $\mathbf{e}^\alpha := \mathcal{G}(\bu^\alpha) - \mathcal{G}(\bu^*)$. Then since $\cG(\bu^\alpha) = \bu^\alpha$, one has
$$   \bu^\alpha - \bu^* = \mathcal{G}(\bu^*) - \bu^* + \mathbf{e}^\alpha \,.$$
Combining this with the observation
$$ \norm{\mathbf{e}^\alpha} =  \norm{\mathcal{G}(\bu^\alpha) - \mathcal{G}(\bu^*)} \leq \norm{\cF(\bu^\alpha) - \cF(\bu^*)} 
  \leq \rho \norm{\bu^\alpha-\bu^*}  \,,   $$
where we have again used that $\mathbf{S}_\alpha$ is non-expansive, yields
\[
    \norm{\bu^\alpha - \bu^*} \leq \norm{\mathcal{G}(\bu^*) - \bu^*} + \rho \norm{\bu^\alpha - \bu^*}  \,,\quad
    \norm{\mathcal{G}(\bu^*) - \bu^*} \leq \norm{\bu^\alpha - \bu^*} + \rho \norm{\bu^\alpha - \bu^*}  \,.   
\]
Finally, noting that $\mathcal{G}(\bu^*) = \mathbf{S}_\alpha(\bu^*)$ since $\mathcal{F}(\bu^*)=\bu^*$, we arrive at \eqref{fpdist}.
\end{proof}

Theorem \ref{ualphaestimate} tells us that if we keep the thresholding parameter $\alpha$ fixed, the thresholded Richardson iteration will converge, at the same rate $\rho$ as the unperturbed Richardson iteration, to a modified solution $\bu^\alpha$. Its distance to the true solution $\bu^*$ is uniformly proportional to $\norm{\mathbf{S}_\alpha(\bu^*)-\bu^*}$, that is, to the error of thresholding the exact solution.

\subsection{A Priori Choice of Thresholding Parameters}

In order to ensure convergence to $\bu^*$, instead of working with a fixed $\alpha$, we will instead consider the iteration
\begin{equation}\label{iteralphak}
\mathbf{u}_{k+1} =  \mathbf{S}_{\alpha_k}  \bigl( \cF(\bu_k) \bigr) \,,
\end{equation}
where we choose $\alpha_k$ with $\alpha_k \to 0$.
The central question is now how one can obtain a suitable such choice; clearly, if the $\alpha_k$ decrease too slowly, this will hamper the convergence of the iteration, whereas $\alpha_k$ that decrease too quickly may lead to very large tensor ranks of the iterates.

In principle, if the decay of the sequences $\sigma_t(\bu^*)$ is known, for instance $\sigma_t(\bu^*) \in \weakl{p}$, then Remark \ref{salphaconv} immediately gives us a choice of values for $\alpha_k$ that ensure convergence to $\bu^*$ with \emph{almost} the unperturbed rate $\rho$. To this end, observe that
\begin{align}  
 \norm{ \bu_{k+1} - \bu^* }  & \leq \norm{\bu_{k+1} - \bu^{\alpha_k} } + \norm{\bu^{\alpha_k} - \bu^*}  \leq \rho \norm{\bu_k - \bu^* } + ( 1 + \rho) \norm{\bu^{\alpha_k} - \bu^*}  \notag \\
    & \leq \rho \norm{\bu_{k} - \bu^* }  + \frac{1+\rho}{1-\rho}   \sum_{t} d^{\alpha_k}_t (\bu^*)  \,, \label{idealiter}
\end{align}
and based on Remark \ref{salphaconv} we can adjust $\alpha_k$ in every step so as to balance the decrease in the two terms on the right hand side of \eqref{idealiter}. Choices of $\alpha_k$ in \eqref{iteralphak} for the respective cases in Remark \ref{salphaconv} are given in the following proposition.

\begin{proposition}\label{idealconv}
Let $\sigma_t (\bu^*) \in \weakl{p}$, $t=1,\ldots,E$, for a $p\in(0,2)$, let $c_0>0$, and let $\bu_0 := 0$. Then for the choice $\alpha_k := (\rho^{k+1} c_0)^\frac{2}{2-p}$ in the iteration \eqref{iteralphak}, we have
\begin{equation}
 \norm{\bu_{k} - \bu^*} \leq \bigl( \norm{\bu^*} + C  E \max_t \abs{\sigma_t(\bu^*)}^{p/2}_{\weakl{p}}  \, k \bigr)\, \rho^k  \,,
\end{equation}
where $C$ depends on $\rho$, $p$, and $c_0$. Furthermore, for any $\tilde\rho > \rho$, with $\alpha_k := (\tilde\rho^{k+1} c_0)^\frac{2}{2-p}$, we have
\begin{equation}\label{idealconvest}
 \norm{\bu_{k} - \bu^*} \leq \Biggl( \norm{\bu^*} +  \frac{ C  E \max_t \abs{\sigma_t(\bu^*)}^{p/2}_{\weakl{p}}  \tilde{\rho}}{\tilde{\rho}-\rho} \Biggr)\, \tilde\rho^k 
\end{equation}
Under the same assumptions, but with the stronger condition $\sigma_{t,k}(\bu^*) \leq  C e^{- c k^\beta}$ with $C, c , \beta> 0$, for the choice $\alpha_k := \rho^{k+1} c_0$ we have
\begin{equation*}  
  \norm{\bu_{k} - \bu^*} \lesssim E k^{1 + {\frac{1}{2\beta}}} \, \rho^k   \,, 
\end{equation*}
and with $\alpha_k := \tilde\rho^{k+1} c_0$, we have instead
\begin{equation}  \label{idealconvexp}
 \norm{\bu_{k} - \bu^*} \lesssim E \,k^{ {\frac{1}{2\beta}}} \, \tilde\rho^k \,,  
\end{equation}
where the constant depends on $(\tilde\rho - \rho)^{-1}$.
\end{proposition}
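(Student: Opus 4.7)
The plan is to reduce all four estimates to a scalar Gronwall-type recursion. Writing $M_k := \norm{\bu_k - \bu^*}$ and $\Sigma(\alpha) := \sum_t d_t^\alpha(\bu^*)$, the bound \eqref{idealiter} reads $M_{k+1} \le \rho M_k + c_\rho\,\Sigma(\alpha_k)$ with $c_\rho := (1+\rho)/(1-\rho)$, and since $\bu_0 = 0$, unrolling gives
\[
   M_k \le \rho^k \norm{\bu^*} + c_\rho \sum_{j=0}^{k-1} \rho^{k-1-j}\,\Sigma(\alpha_j)\,.
\]
It therefore suffices to insert the prescribed $\alpha_k$, invoke Remark \ref{salphaconv} to bound $\Sigma(\alpha_k)$, and evaluate the resulting convolution sum.

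For case (a), Remark \ref{salphaconv}(a) yields $\Sigma(\alpha) \lesssim E \max_t \abs{\sigma_t(\bu^*)}^{p/2}_{\weakl{p}}\,\alpha^{1-p/2}$. The exponent $2/(2-p)$ in the first choice is chosen precisely so that $\alpha_k^{1-p/2} = \rho^{k+1} c_0$, so the forcing becomes $A\rho^{k+1}$ with $A$ absorbing $\rho, p, c_0$ and $\max_t\abs{\sigma_t(\bu^*)}^{p/2}_{\weakl{p}}$, and the identity $\sum_{j=0}^{k-1}\rho^{k-1-j}\rho^{j+1} = k\rho^k$ yields the first bound. For $\alpha_k = (\tilde\rho^{k+1} c_0)^{2/(2-p)}$ with $\tilde\rho > \rho$ the forcing is $A\tilde\rho^{k+1}$, and the standard geometric-series evaluation $\sum_{j=0}^{k-1}\rho^{k-1-j}\tilde\rho^{j+1} = \tilde\rho(\tilde\rho^k-\rho^k)/(\tilde\rho-\rho) \le \tilde\rho^{k+1}/(\tilde\rho-\rho)$ produces \eqref{idealconvest}.

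For case (b), Remark \ref{salphaconv}(b) gives $\Sigma(\alpha) \lesssim E(1+\abs{\ln\alpha})^{1/(2\beta)}\,\alpha$. With $\alpha_k = \rho^{k+1} c_0$ we have $1+\abs{\ln\alpha_k} \lesssim k+1$, so the forcing is $\lesssim E(k+1)^{1/(2\beta)}\rho^{k+1}$ and the convolution is $\rho^k\sum_{j=0}^{k-1}(j+1)^{1/(2\beta)} \lesssim k^{1+1/(2\beta)}\rho^k$ by integral comparison. For $\alpha_k = \tilde\rho^{k+1} c_0$ with $\tilde\rho > \rho$, I use monotonicity of $j \mapsto (j+1)^{1/(2\beta)}$ to majorize it by $k^{1/(2\beta)}$ inside the sum and then reuse the geometric bound $\sum_{j=0}^{k-1}\rho^{k-1-j}\tilde\rho^{j+1} \le \tilde\rho^{k+1}/(\tilde\rho-\rho)$ from case (a), producing \eqref{idealconvexp}. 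The only point that requires real care is precisely this last one: verifying that in the $\tilde\rho > \rho$ exponential regime the geometric dominance by $\tilde\rho^k$ actually strips off one factor of $k$ relative to the $\tilde\rho = \rho$ case; all other ingredients are the algebraic identity $(1-p/2)\cdot 2/(2-p) = 1$ built into case (a), a telescoping unrolling, and standard geometric and integral sum estimates.
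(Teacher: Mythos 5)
Your proof is correct and follows essentially the same route as the paper: unroll \eqref{idealiter} from $\bu_0=0$, insert the chosen $\alpha_k$, bound $\sum_t d_t^{\alpha_k}(\bu^*)$ via Remark \ref{salphaconv}, and evaluate the resulting geometric convolution (the paper introduces $\theta=\rho/\tilde\rho$ where you instead use the closed-form geometric sum, but these are equivalent). Your treatment of case (b) with $\tilde\rho>\rho$, pulling $(j+1)^{1/(2\beta)}\le k^{1/(2\beta)}$ out of the sum before applying the geometric bound, is a valid (if slightly cruder) alternative to the paper's step, which weights the logarithmic factor against $\theta^{k-1-i}$ and bounds the sum by its dominant $i=k-1$ term.
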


\begin{proof}
Using \eqref{idealiter} and Remark \ref{salphaconv}(a), we obtain
\[    \norm{\bu_k - \bu^*} \leq \rho^k \norm{ \bu^* } + C_1 E \max_t \abs{\sigma_t(\bu^*)}^{\frac{p}2}_{\weakl{p}}   \sum_{i=0}^{k-1} \rho^{k-i-1} \alpha_i^{1-\frac{p}2} \,.  \]
Note that with our choice of $\alpha_k$, we have 
$\alpha_i^{1-\frac{p}2} = \rho^{i+1} c_0$,
which implies the first statement. For the second choice of $\alpha_k$, we obtain instead
\[  \norm{\bu_k - \bu^*}  \leq   \Bigl( \theta^k \norm{\bu^*} + C_1 E  \max_t \abs{\sigma_t(\bu^*)}^{\frac{p}2}_{\weakl{p}}  c_0 \sum_{i=0}^{k-1}  \theta^{k-1-i} \Bigr) \tilde\rho^k  \,, \quad \theta := \frac{\rho}{\tilde\rho}  < 1\,.   \]

Under the second set of assumptions, we proceed analogously based on Remark \ref{salphaconv}(b), which for $\alpha_k := \rho^{k+1} c_0$ yields
\[    \norm{\bu_k - \bu^*} \lesssim \rho^k  +  E \sum_{i=0}^{k-1} \rho^{k-i-1} (1 + \abs{\ln c_0} + i \abs{\ln \rho})^{\frac1{2\beta}} \rho^{i+1} \,.   \]
where the modified power of $k$ in the assertion thus arises due to the logarithmic term in \eqref{salphaconvexp}. For $\alpha_k := \tilde\rho^{k+1} c_0$, with $\theta$ as above,
\begin{equation*} 
 \norm{\bu_k - \bu^*} \lesssim \Bigl( \theta^k   +  E \sum_{i=0}^{k-1} \theta^{k-i-1} (1 + i \abs{\ln \tilde\rho})^{\frac1{2\beta}}  \Bigr) \tilde\rho^{i+1}   
  \lesssim E k^{\frac1{2\beta}} \tilde\rho^k \,. \qedhere
\end{equation*}
\end{proof}

In summary, in this idealized setting with full knowledge of the decay of $\norm{\mathbf{S}_\alpha(\bu^*) - \bu^*}$ with respect to $\alpha$, we can achieve convergence of the iteration with any asymptotic rate $\tilde \rho > \rho$.

\subsection{Rank Estimates}

We now give estimates for the ranks of the iterates that can arise in the course of the iteration, assuming that the $\alpha_k$ are chosen as in Proposition \ref{idealconv}.

For the proof we will use the following lemma, which is a direct adaptation of \cite[Lemma 5.1]{Cohen:01}, where the same argument was applied to hard thresholding of sequences; it was restated for soft thresholding of sequences (with the same proof) in \cite{DahlkeFornasierRaasch:12}.

\begin{lemma}\label{thresholdinglemma}
Let $\bv,\bw\in\Hcal$ and $\varepsilon >0$ such that $\norm{\bv-\bw}\leq \varepsilon$, and for $t\in \{1,\ldots,E\}$ let $\sigma_t(\bv)\in \weakl{p}$ for a $p\in(0,2)$. Then
$$   \rank_t \bigl(\mathbf{S}_\alpha(\bw)\bigr)  \leq \frac{4 \varepsilon^2}{\alpha^2} + C_p \abs{\sigma_t(\bv)}^p_{\weakl{p}} \alpha^{-p} \,.  $$
If $\sigma_{t,k}(\bv) \leq C e^{-c k^\beta}$ for $k\in\N$ with $C, c, \beta> 0$, then
$$   \rank_t \bigl(\mathbf{S}_\alpha(\bw)\bigr)  \leq \frac{4 \varepsilon^2}{\alpha^2} + \bigl( c^{-1} \ln (2C\alpha^{-1}) \bigr)^{1/\beta} \,.  $$
\end{lemma}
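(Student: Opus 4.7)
The plan is to reduce the tensor-level statement to a sequence-level counting argument, adapting the classical proof from \cite{Cohen:01}. The main work is to extract, from the nested soft thresholding $\bS_\alpha$, an effective bound on $\sigma_{t,k}$ after thresholding in terms of $\sigma_{t,k}(\bw)$ before thresholding.

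First I would establish the key reduction
\[
   \rank_t\bigl(\bS_\alpha(\bw)\bigr)  \leq \#\bigl\{ k \in \N \colon \sigma_{t,k}(\bw) > \alpha \bigr\}\,.
\]
To see this, let $\bw^{(0)} := \bw$ and $\bw^{(s)} := S_{s,\alpha}(\bw^{(s-1)})$. By Lemma \ref{monotonicity}, the sequence $s \mapsto \sigma_{t,k}(\bw^{(s)})$ is non-increasing for every fixed $k$, so in particular $\sigma_{t,k}(\bw^{(t-1)}) \leq \sigma_{t,k}(\bw)$. At step $t$, the operator $S_{t,\alpha}$ acts by soft thresholding directly on $\sigma_t(\bw^{(t-1)})$, leaving only those singular values originally exceeding $\alpha$; after that, further applications of $S_{s,\alpha}$ for $s > t$ can only decrease $\sigma_{t,k}$ further by Lemma \ref{monotonicity}. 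Hence the rank of $\mathcal{M}_t(\bS_\alpha(\bw))$ is bounded by $\#\{k : \sigma_{t,k}(\bw^{(t-1)}) > \alpha\}$, and then by $\#\{k : \sigma_{t,k}(\bw) > \alpha\}$.

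Next I would use Theorem \ref{thm:mirsky} together with \eqref{matricizationnorm} to obtain
\[
  \norm{\sigma_t(\bv) - \sigma_t(\bw)}_{\ell_2} \leq \norm{\mathcal{M}_t(\bv) - \mathcal{M}_t(\bw)}_\HS = \norm{\bv - \bw} \leq \varepsilon\,,
\]
so that $\sigma_t(\bw)$ is an $\ell_2$-perturbation of $\sigma_t(\bv)$, and then split the counting by comparing with $\sigma_t(\bv)$ at the level $\alpha/2$. Writing
\[
  \#\{k \colon \sigma_{t,k}(\bw) > \alpha\} \leq \#\{k \colon \sigma_{t,k}(\bv) > \alpha/2\} + \#\bigl\{k \colon \sigma_{t,k}(\bv) \leq \alpha/2,\ \sigma_{t,k}(\bw) > \alpha\bigr\}\,,
\]
the second term is bounded by Chebyshev/Markov: every such index contributes at least $(\alpha/2)^2$ to $\sum_k |\sigma_{t,k}(\bw)-\sigma_{t,k}(\bv)|^2 \leq \varepsilon^2$, yielding at most $4\varepsilon^2/\alpha^2$ such indices. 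The first term is controlled by the prescribed decay of $\sigma_t(\bv)$: in the weak-$\ell_p$ case it is at most $\abs{\sigma_t(\bv)}_{\weakl{p}}^p (\alpha/2)^{-p} = 2^p \abs{\sigma_t(\bv)}_{\weakl{p}}^p \alpha^{-p}$, giving $C_p = 2^p$; in the exponential case, $\sigma_{t,k}(\bv) > \alpha/2$ forces $C e^{-ck^\beta} > \alpha/2$, hence $k < (c^{-1} \ln(2C\alpha^{-1}))^{1/\beta}$, bounding the cardinality as claimed.

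The only delicate point is the first reduction: one has to be careful that the non-expansiveness of $\bS_\alpha$ alone would not suffice, since a direct Mirsky-type estimate for $\sigma_t(\bS_\alpha(\bw))$ versus $\sigma_t(\bw)$ loses the explicit soft-thresholding cutoff. Invoking Lemma \ref{monotonicity} in the precise ordering dictated by the definition \eqref{Salphadef} of $\bS_\alpha$ is what makes the argument go through; once that reduction is in hand, the remaining steps are the standard Cohen--Dahmen--DeVore splitting.
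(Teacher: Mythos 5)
Your proof is correct and follows essentially the same route as the paper: reduce via Lemma~\ref{monotonicity} to bounding $\rank_t(S_{t,\alpha}(\bw)) = \#\{k : \sigma_{t,k}(\bw) > \alpha\}$, then apply Theorem~\ref{thm:mirsky} together with the Cohen--Dahmen--DeVore index splitting at level $\alpha/2$. Your more explicit unrolling of the monotonicity argument through the intermediate $\bw^{(s)}$ is a welcome clarification of what the paper states in one line, and the remark on why non-expansiveness alone would not suffice is apt.
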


\begin{proof}
Note first that as an immediate consequence of Theorem \ref{thm:mirsky}, for each $t$ we have
\begin{equation*}   \norm{\sigma_t(\bv) - \sigma_t(\bw)} =  \Bigl( \sum_i \abs{\sigma_{t,i}(\bv)-\sigma_{t,i}(\bw)}^2\Bigr)^{\frac12} 
  \leq \norm{\mathcal{M}_t(\bv) - \mathcal{M}_t(\bw)}_\HS =  \norm{\bv - \bw} \leq \varepsilon \,. 
\end{equation*}
Furthermore,  Lemma \ref{monotonicity} yields $\rank_t(\mathbf{S}_\alpha(\bw)) \leq \rank_t(S_{t,\alpha}(\bw))$, and it thus suffices to estimate the latter.

The first inequality in the statement now follows with the same argument as in \cite{Cohen:01}, which we include for the convenience of the reader. We abbreviate $a:= \sigma_t(\bv)$, $b:= \sigma_t(\bw)$. Let $\mathcal{I}_1 := \{ i\colon b_i \geq \alpha, a_i > \alpha/2\}$ and $\mathcal{I}_2 := \{ i \colon b_i \geq \alpha, a_i \leq \alpha/2\}$.
Then 
\[   \Bigl( \frac\alpha2 \Bigr)^2 \#\mathcal{I}_2  \leq \sum_{i\in\mathcal{I}_2} \abs{a_i - b_i}^2 \leq  \varepsilon^2    \]
as well as
\[    \#\mathcal{I}_1 \leq \#\{ i\colon a_i > \alpha/2\} \leq C_p \abs{\sigma_t(\bv)}^p_{\weakl{p}} \alpha^{-p} \,,     \]
which proves the first statement, since
$  \rank_t(S_{t,\alpha}(\bw)) \leq \#\mathcal{I}_1 + \#\mathcal{I}_2 $.
To obtain the second inequality, we use Remark \ref{salphaconv}(b) to estimate $\#\mathcal{I}_1$ in an analogous way.
\end{proof}

\begin{theorem}\label{rankest1}
Let $\tilde\rho>\rho$, $\bu_0 := 0$, and $\varepsilon_k := \tilde\rho^k$.
If $\sigma_t (\bu^*) \in \weakl{p}$, $t=1,\ldots,E$, for a $p\in(0,2)$, then for the choice $\alpha_k := (\tilde\rho^{k+1} c_0)^\frac{2}{2-p}$ with $c_0>0$ in the iteration \eqref{iteralphak}, we have
\begin{equation}
  \norm{\bu_k -\bu^*} \lesssim d \varepsilon_k \,, \qquad  \max_{t=1,\ldots,E} \rank_t(\bu_k) \lesssim d^2 \varepsilon_k^{-\frac1s}\,,\; s = \textstyle\frac1p - \frac12 \,.
\end{equation}
Under the same assumptions, but with $\sigma_{t,k}(\bu^*) \leq C e^{- c k^\beta}$, $t=1,\ldots,E$, with $C, c, \beta> 0$, for the choice $\alpha_k := \tilde\rho^{k+1} c_0$ we have
\begin{equation}\label{rankest1exp}
  \norm{\bu_{k} - \bu^*} \lesssim d \bigl( 1+  \abs{\log \varepsilon_k} \bigr)^{\frac1{2\beta}}  \varepsilon_k \,, \qquad \max_{t=1,\ldots,E} \rank_t(\bu_k) \lesssim d^2 \bigl( 1+ \abs{\log \varepsilon_k}\bigr)^{\frac1\beta}  \,.
   \end{equation}
\end{theorem}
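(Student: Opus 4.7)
The error estimates follow from Proposition~\ref{idealconv} with only a cosmetic change: since $E = 2d-3 \lesssim d$, the weak-$\ell_p$ bound \eqref{idealconvest} becomes $\norm{\bu_k - \bu^*} \lesssim d\, \varepsilon_k$, while in the exponential case \eqref{idealconvexp} reads $\norm{\bu_k-\bu^*}\lesssim E k^{1/(2\beta)}\tilde\rho^k$; since $\varepsilon_k=\tilde\rho^k$ gives $k\sim 1+\abs{\log \varepsilon_k}$, this is exactly the stated bound.

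For the rank estimates, note $\bu_k = \bS_{\alpha_{k-1}}(\cF(\bu_{k-1}))$. The contractivity \eqref{fpmapping} together with the just-established error bound yields
\[
  \eta_k := \norm{\cF(\bu_{k-1}) - \bu^*} = \norm{\cF(\bu_{k-1}) - \cF(\bu^*)} \le \rho\,\norm{\bu_{k-1} - \bu^*} \lesssim d\, \varepsilon_k,
\]
where the last step uses $\varepsilon_{k-1} = \tilde\rho^{-1}\varepsilon_k$. Applying Lemma~\ref{thresholdinglemma} with $\bv = \bu^*$, $\bw = \cF(\bu_{k-1})$ and threshold $\alpha_{k-1}$ gives, in the weak-$\ell_p$ case,
\[
  \rank_t(\bu_k) \;\le\; \frac{4\eta_k^2}{\alpha_{k-1}^2} + C_p\,\abs{\sigma_t(\bu^*)}_{\weakl p}^p\, \alpha_{k-1}^{-p}.
\]
Substituting $\alpha_{k-1} = (\tilde\rho^k c_0)^{2/(2-p)}$ and using $1/s = 2p/(2-p)$, direct computation shows $\alpha_{k-1}^{-p} \sim \varepsilon_k^{-1/s}$ and $\eta_k^2/\alpha_{k-1}^2 \lesssim d^2\, \varepsilon_k^{-1/s}$; hence $\rank_t(\bu_k) \lesssim d^2\, \varepsilon_k^{-1/s}$, with the $d^2$ factor inherited entirely from the first term through the squaring of $\eta_k$.

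The exponential case is identical in structure: with $\alpha_{k-1} = \tilde\rho^k c_0 \sim \varepsilon_k$ and the logarithmic error bound $\eta_k \lesssim d\,(1+\abs{\log\varepsilon_k})^{1/(2\beta)}\,\varepsilon_k$, the first term in Lemma~\ref{thresholdinglemma} contributes $\lesssim d^2(1+\abs{\log\varepsilon_k})^{1/\beta}$, and the second term $(c^{-1}\log(2C\alpha_{k-1}^{-1}))^{1/\beta}$ contributes $\lesssim (1+\abs{\log\varepsilon_k})^{1/\beta}$, yielding \eqref{rankest1exp}. The proof is essentially routine given Proposition~\ref{idealconv} and Lemma~\ref{thresholdinglemma}; the only bookkeeping points — and the only places where one must be careful — are (i) that shifting between $\alpha_{k-1},\varepsilon_{k-1}$ and $\alpha_k,\varepsilon_k$ costs only $\tilde\rho$-dependent constants absorbed into $\lesssim$, and (ii) that the powers of $\tilde\rho$ from the error bound and from $\alpha_{k-1}^{-p}$ must be checked to combine exactly to $\varepsilon_k^{-1/s}$ in the weak-$\ell_p$ case.
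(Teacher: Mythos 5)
Your proof is correct and follows essentially the same route as the paper's: cite Proposition~\ref{idealconv} for the error bounds (with $E=2d-3\lesssim d$), then apply Lemma~\ref{thresholdinglemma} to $\bu_k=\bS_{\alpha_{k-1}}(\cF(\bu_{k-1}))$ with $\bv=\bu^*$, using contractivity to bound $\norm{\cF(\bu_{k-1})-\bu^*}$, and check that the exponents of $\tilde\rho$ combine to $\varepsilon_k^{-1/s}$ in the algebraic case and absorb into the logarithmic factor in the exponential case. Your bookkeeping of the $\tilde\rho^{-1}$ shift and the identity $1/s=2p/(2-p)$ is the same check the paper performs.
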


\begin{proof}
Recall that $E = 2d -3$.
The estimates $\norm{\bu_k - \bu^*}\lesssim E \varepsilon_k $ were shown in \eqref{idealconvest} and \eqref{idealconvexp} of Proposition \ref{idealconv}. To obtain the corresponding estimates for the ranks, we use Lemma \ref{thresholdinglemma}. Note that with $\bw_k := \cF(\bu_{k-1})$, we have $\bu_k = \mathbf{S}_{\alpha_{k-1}}(\bw_k)$ and, by contractivity of $\cF$,
\[   \norm{\bw_k - \bu^*} \leq  \rho \norm{\bu_{k-1} - \bu^*}  \lesssim E \varepsilon_{k-1}  \,.  \]
In the first case, for each $t$, Lemma \ref{thresholdinglemma} gives
$$   \rank_t(\mathbf{S}_{\alpha_{k-1}}(\bw_k)) \lesssim \frac{(E\varepsilon_k)^2}{\alpha_{k-1}^2} + \alpha_{k-1}^{-p} \lesssim E^2 \bigl( \tilde\rho^{1 -\frac{2}{2-p}}\bigr)^{2k} + \tilde\rho^{-\frac{2pk}{2-p}} \,. $$
Noting that $\bigl(\tilde\rho^{1 -\frac{2}{2-p}}\bigr)^{2k} = \tilde\rho^{-\frac{2pk}{2-p}} = \varepsilon_k^{-\frac1s}$, we obtain the first assertion.
In the second case we have $E\varepsilon_k / \alpha_k \lesssim E \bigl( 1 + \abs{\log \varepsilon_k} \bigr)^{\frac1{2\beta} }$, and the lemma thus yields
\begin{equation*}
   \rank_t(\mathbf{S}_\alpha(\bw_k))  \lesssim E^2 \bigl(1 + \abs{\log \varepsilon_k} \bigr)^{\frac1\beta } + \bigl(  1 + \abs{\log \tilde\rho^k c_0} \bigr)^{\frac1\beta}  
     \lesssim E^2 \bigl(1 + \abs{\log \varepsilon_k} \bigr)^{\frac1\beta} \,.  \qedhere  
\end{equation*}
\end{proof}

\section{A Posteriori Choice of Parameters}\label{sec:aposteriori}

The results of the previous section lead to the question whether the results in Theorem \ref{rankest1} can still be recovered when a priori knowledge of the decay of the sequences $\sigma_t(\bu^*)$ is not available.
We thus now consider a modified scheme that adjusts the $\alpha_k$ automatically without using such information on $\bu^*$, but still yields quasi-optimal rank estimates as in Theorem \ref{rankest1} for both cases considered there.

The design of such a method is more problem-specific than the general considerations in the previous section, and here we thus restrict ourselves to linear operator equations
$ \mathcal{A} \bu = \mathbf{f} $
with $\mathcal{A}$ symmetric and elliptic.
We assume to have $\gamma, \Gamma >0$ such that 
\begin{equation}\label{gammaass}
  \gamma \norm{\bv}^2 \leq \langle \Acal \bv,\bv\rangle\leq \Gamma \norm{\bv}^2\,,  \quad \bv\in\Hcal\,,
\end{equation}
that is, the spectrum of $\Acal$ is contained in $[\gamma,\Gamma]$ and $\kappa := \gamma^{-1}\Gamma$ is an upper bound for the condition number of $\Acal$. The choice $\mu := 2 / (\gamma + \Gamma)$ then yields 
\begin{equation}\label{contr}
   \norm{\id - \mu \mathcal{A}} \leq \frac{\kappa - 1}{\kappa + 1} =: \rho < 1 \,,
\end{equation}
and the results of the previous section apply with $\cF(\bv) := \bv - \mu (\mathcal{A} \bv - \mathbf{f})$.

\begin{algorithm}[!ht]
\caption{$\quad \mathbf{u}_\varepsilon = \solve( \Acal, \mathbf{f}; \varepsilon)$} 
\begin{algorithmic}[1]
\Require $\gamma$, $\Gamma$ as in \eqref{gammaass}, $\mu$, $\rho$ as in \eqref{contr}, arbitrary $\nu, \theta \in (0,1)$ and $\alpha_0\geq E^{-1}\mu \norm{\mathbf{f}}$.
\Ensure $\mathbf{u}_\varepsilon$ satisfying $\norm{\mathbf{u}_\varepsilon - \mathbf{u}^*}\leq \varepsilon$.
\State $\mathbf{u}_0 := 0$, $\mathbf{r}_{0} := - \mathbf{f}$, $k:=0$
\While{$\norm{\mathbf{r}_k} > \gamma \varepsilon$}
\State $\mathbf{u}_{k+1} = \mathbf{S}_{\alpha_k} \bigl(\mathbf{u}_{k} - \mu \mathbf{r}_{k} \bigr)$
\State $\mathbf{r}_{k+1} := \mathcal{A}\mathbf{u}_{k+1} -\mathbf{f}$
\If{$\displaystyle  \norm{\bu_{k+1} - \bu_k} \leq \frac{(1 - \rho) \nu}{ \Gamma \rho} \norm{\mathbf{r}_{k+1}} $}\label{condline}
\State $\alpha_{k+1} := \theta \alpha_k$
\Else
\State $\alpha_{k+1} := \alpha_k$
\EndIf 
\State $k \gets k+1$
\EndWhile
\State $\mathbf{u}_\varepsilon := \mathbf{u}_k$ 
\end{algorithmic}
\label{alg:stsolve}
\end{algorithm}

In order to be able to obtain estimates for the ranks of iterates as in Theorem \ref{rankest1}, the method in Algorithm \ref{alg:stsolve} is constructed such that whenever $\alpha_k$ is decreased in the iteration,
\begin{equation}\label{cond0intro}   \norm{\bu_{k+1} - \bu^*} \leq C \norm{\bu^{\alpha_k} - \bu^*}  
\end{equation}
holds with some fixed constant $C>1$. It will be established in what follows that the validity of such an estimate ensures that $\alpha_k$ never becomes too small in relation to the corresponding current error $\norm{\bu_k - \bu^*}$.
A bound of the form \eqref{cond0intro} is ensured by the condition in line \ref{condline} of Algorithm \ref{alg:stsolve}, which is explained in more detail in the proof of Theorem \ref{rankest2} below.

Note that Algorithm \ref{alg:stsolve} only requires -- besides a hierarchical tensor representation of $\mathbf{f}$ and the action of $\Acal$ on such representations -- bounds $\gamma$, $\Gamma$ on the spectrum of $\Acal$, certain quantities derived from these, as well as constants that can be adjusted arbitrarily.
The following is the main result of this work.

\begin{theorem}\label{rankest2}
Algorithm \ref{alg:stsolve} produces $\bu_\varepsilon$ with $\norm{\bu_\varepsilon - \bu^*} \leq \varepsilon$ in finitely many steps.
Furthermore, if $\sigma_t (\bu^*) \in \weakl{p}$, $t=1,\ldots,E$, for a $p\in(0,2)$, then there exists $\tilde\rho \in (0,1)$ such that with $\varepsilon_k := \tilde\rho^k$, the iterates satisfy
\begin{equation}\label{thmwlp}
  \norm{\bu_k -\bu^*} \lesssim d \varepsilon_k \,, \qquad  \max_{t=1,\ldots,E}\rank_t(\bu_k) \lesssim d^2 \max_{\tau=1,\ldots,E} \abs{\sigma_\tau(\bu^*)}_{\weakl{p}}^{\frac1s} \varepsilon_k^{-\frac1s}\,,\quad s = \textstyle\frac1p - \frac12 \,.
\end{equation}
If $\sigma_{t,k}(\bu^*) \leq C e^{- c k^\beta}$, $t=1,\ldots,E$, with $C, c, \beta> 0$, then the analogous statement holds with
\begin{equation}\label{thmexp}
  \norm{\bu_{k} - \bu^*} \lesssim d \varepsilon_k \,, \qquad \max_{t=1,\ldots,E}\rank_t(\bu_k) \lesssim d^2  \bigl( 1+ \abs{\ln \varepsilon_{k}} \bigr)^{\frac1\beta }  \,.
   \end{equation}
\end{theorem}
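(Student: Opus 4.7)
The plan is to extract from the trigger condition in line~\ref{condline} an a posteriori estimate that ties the current iterate to the hierarchical thresholding error of $\bu^*$, and then to feed this estimate into Lemma~\ref{thresholdinglemma} to bound the ranks.

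First, I will show that whenever the test in line~\ref{condline} fires at step~$k$, one has
$$
\norm{\bu_{k+1}-\bu^*} \;\le\; \frac{1}{(1-\nu)(1-\rho)}\,\norm{\bS_{\alpha_k}(\bu^*)-\bu^*}.
$$
Setting $\cF(\bv):=\bv-\mu(\Acal\bv-\mathbf{f})$, which is a contraction of rate $\rho$ by \eqref{contr}, Lemma~\ref{ualphaestimate} produces the fixed point $\bu^{\alpha_k}$ of $\bS_{\alpha_k}\circ\cF$ together with $\norm{\bu_{k+1}-\bu^{\alpha_k}}\le \rho\norm{\bu_k-\bu^{\alpha_k}}$. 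Combining this with the reverse triangle inequality $\norm{\bu_{k+1}-\bu_k}\ge(1-\rho)\norm{\bu_k-\bu^{\alpha_k}}$ and the upper ellipticity bound $\norm{\br_{k+1}}\le \Gamma\norm{\bu_{k+1}-\bu^*}$ from \eqref{gammaass}, the trigger inequality reduces to $\norm{\bu_{k+1}-\bu^{\alpha_k}}\le \nu\norm{\bu_{k+1}-\bu^*}$; one more triangle inequality combined with $\norm{\bu^{\alpha_k}-\bu^*}\le (1-\rho)^{-1}\eta_{\alpha_k}$ (where $\eta_\alpha:=\norm{\bS_\alpha(\bu^*)-\bu^*}$) closes the estimate.

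Second, I will organise the run into phases indexed by $j\ge 0$, where phase~$j$ collects those iterations for which $\alpha_k=\beta_j:=\theta^j\alpha_0$. Starting from $\bu_{\tilde k_j}$ at the beginning of phase~$j$, Lemma~\ref{ualphaestimate} gives $\norm{\bu_{\tilde k_j+n}-\bu^{\beta_j}}\le \rho^n R_j$ with $R_j\lesssim\eta_{\beta_{j-1}}$ (by the previous step, applied at the trigger that ended phase~$j-1$), while $M_j:=\norm{\bu^{\beta_j}-\bu^*}\ge(1+\rho)^{-1}\eta_{\beta_j}$. Writing both sides of the trigger condition in terms of $R_j,M_j$ shows the test must fire within $L_j\lesssim 1+\log(\eta_{\beta_{j-1}}/\eta_{\beta_j})/\log(1/\rho)$ iterations; combined with Remark~\ref{salphaconv} the ratio $\eta_{\beta_{j-1}}/\eta_{\beta_j}$ is bounded uniformly in $j$, yielding $L_j\le N$ for some $N$ depending on $\rho,\nu,\kappa,\theta$ and on the decay of $\sigma_t(\bu^*)$. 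Termination of the while-loop then follows because $\alpha_k\downarrow 0$ geometrically and $\bu_k\to\bu^*$.

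Third, for the quantitative estimates I telescope the recursion $e_{k+1}\le \rho e_k+(1+\rho)(1-\rho)^{-1}\eta_{\beta_j}$ within phase~$j$, seeded by the first step, to get $\norm{\bu_k-\bu^*}\lesssim \eta_{\beta_{j-1}}$ throughout the phase. Since $\bu_{k+1}=\bS_{\beta_j}(\cF(\bu_k))$ and $\norm{\cF(\bu_k)-\bu^*}\le\rho\norm{\bu_k-\bu^*}$, Lemma~\ref{thresholdinglemma} with $\bv=\bu^*$ produces
$$
\rank_t(\bu_{k+1})\;\lesssim\; \frac{\eta_{\beta_{j-1}}^2}{\beta_j^2}+|\sigma_t(\bu^*)|_{\weakl{p}}^p\,\beta_j^{-p},
$$
and Remark~\ref{salphaconv}(a) combined with $\beta_{j-1}=\theta^{-1}\beta_j$ merges the two terms into $\lesssim E^2\max_\tau|\sigma_\tau(\bu^*)|_{\weakl{p}}^p\beta_j^{-p}$. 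Using $J_k\ge k/N-O(1)$ triggers by step~$k$ yields $\beta_{J_k}\lesssim\theta^{k/N}\alpha_0$; choosing $\tilde\rho:=\theta^{(1-p/2)/N}\in(0,1)$ then converts the estimates into the claimed bounds \eqref{thmwlp} via $\beta_j^{-p}\lesssim\varepsilon_k^{-1/s}$ with $s=1/p-1/2$. The exponential case \eqref{thmexp} is handled identically, using Remark~\ref{salphaconv}(b) for $\eta_\alpha$ and the second inequality of Lemma~\ref{thresholdinglemma} for $\rank_t(\bu_{k+1})$.

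The main obstacle is the uniform bound $L_j\le N$. Remark~\ref{salphaconv} provides only an upper bound on $\eta_\alpha$, so the ratio $\eta_{\beta_{j-1}}/\eta_{\beta_j}$ need not stay controlled in degenerate regimes where some $\sigma_t(\bu^*)$ has exhausted its mass above $\beta_j$. I would resolve this by a dichotomy: either the ratio is controlled by the structural estimate from Remark~\ref{salphaconv} (the generic case, giving $L_j\le N$), or $\eta_{\beta_j}$ has already decayed so far that the iterates within phase~$j$ meet the outer stopping criterion $\norm{\br_k}\le\gamma\varepsilon$ before the next reduction of $\alpha$, in which case the bounds hold trivially for the terminating iterate. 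Absorbing the $\bu^*$-dependent constants that appear in $N$ and $\tilde\rho$ is the principal bookkeeping subtlety.
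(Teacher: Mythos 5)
Your overall architecture matches the paper's: you derive the a posteriori consequence of the trigger condition, organize the run into phases with constant $\alpha$, estimate the phase lengths $L_j$, and then propagate these into Lemma~\ref{thresholdinglemma} to get the rank bounds. The first two components (the trigger implies $\norm{\bu_{k+1}-\bu^{\alpha_k}}\le\nu\norm{\bu_{k+1}-\bu^*}$ and hence $\norm{\bu_{k+1}-\bu^*}\lesssim\norm{\bu^{\alpha_k}-\bu^*}$; the geometric convergence within a phase) are exactly Steps 1--2 of the paper's proof, and your Step 3 rank bookkeeping matches Step 4 of the paper.

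However, there is a genuine gap precisely where you flag the ``main obstacle'': you need a \emph{uniform} bound on the phase lengths $L_j$, which you correctly reduce to controlling the ratio $\eta_{\beta_{j-1}}/\eta_{\beta_j}$, but the dichotomy you propose does not close the gap. Remark~\ref{salphaconv} gives only \emph{upper} bounds on $\eta_\alpha=\norm{\bS_\alpha(\bu^*)-\bu^*}$, so it cannot rule out $\eta_{\beta_j}\ll\eta_{\beta_{j-1}}$, and the ``terminates before the next reduction'' branch fails because termination is governed by the target tolerance $\varepsilon$, not by the current state: for small $\varepsilon$ the algorithm will pass through many more phases with the thresholding parameter still large compared to $\gamma\varepsilon$, so the degenerate ratio can occur long before termination. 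If some $L_j$ is unbounded, you cannot fit a single geometric sequence $\varepsilon_k=\tilde\rho^k$ that simultaneously majorizes $\norm{\bu_k-\bu^*}$ from above at the start of each phase and stays $\lesssim M^{p/2}\beta_j^{1-p/2}$ for the rank bound, so the claimed $\tilde\rho\in(0,1)$ would not exist. The paper closes exactly this gap with Lemma~\ref{invbound}, which shows $d^\alpha_t(\bv)\le\theta^{-1}d^{\theta\alpha}_t(\bv)$ for any $\bv\neq 0$; combined with the two-sided bound in Lemma~\ref{lmm:st} and with \eqref{fpdist}, this yields the uniform ratio bound
$\norm{\bu^{\eta_{i-1}}-\bu^*}/\norm{\bu^{\eta_i}-\bu^*}\le\tfrac{(1+\rho)E}{(1-\rho)\theta}$,
hence $J_i\lesssim\ln d$ with a constant independent of the decay of $\sigma_t(\bu^*)$. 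Without this lemma (or an equivalent reverse inequality on the soft-thresholding error as a function of the threshold), the phase-length bound, and therefore the geometric rate and the rank estimates, are not established.
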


In the proof we will use the following technical lemma, which limits the decay of the soft thresholding error as the thresholding parameter is decreased. 

\begin{lemma}\label{invbound}
Let $\bv\neq 0$, then
$ d^{\alpha}_t(\bv) \leq \theta^{-1} d^{\theta\alpha}_t (\bv) $, $t=1,\ldots,E$,
for all  $\alpha >0$, $\theta \in (0,1)$.
\end{lemma}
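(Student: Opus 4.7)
The plan is to reduce the inequality to a pointwise comparison on individual singular values. Recall from Lemma \ref{lmm:st} that
\[
  \bigl( d^\alpha_t(\bv) \bigr)^2 = \alpha^2 r_{t,\alpha}(\bv) + \bigl(\tau_{t,\alpha}(\bv)\bigr)^2 = \sum_{k \in \N} e_\alpha\bigl(\sigma_{t,k}(\bv)\bigr)^2,
\]
where, writing out the action of soft thresholding on each singular value, the per-coordinate error function is $e_\alpha(x) := \min(x,\alpha)$ for $x \geq 0$: the contribution equals $\alpha$ when $\sigma_{t,k}(\bv) > \alpha$ and equals $\sigma_{t,k}(\bv)$ when $\sigma_{t,k}(\bv) \leq \alpha$.

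The first step is to establish the scalar inequality
\[
   e_\alpha(x) \leq \theta^{-1} e_{\theta\alpha}(x)\,,\quad x \geq 0,\ \alpha > 0,\ \theta\in(0,1)\,.
\]
I would verify this by distinguishing the three cases $x \geq \alpha$, $\theta\alpha \leq x < \alpha$, and $x < \theta\alpha$: in the first, both sides equal $\alpha$; in the second, the left side is $x < \alpha$ while the right side equals $\alpha$; in the third, the left side is $x$ and the right side is $\theta^{-1} x > x$.

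The second step is simply to square this inequality, apply it to $x = \sigma_{t,k}(\bv)$ for each $k \in \N$, and sum:
\[
  \bigl( d^\alpha_t(\bv) \bigr)^2 = \sum_k e_\alpha(\sigma_{t,k}(\bv))^2 \leq \theta^{-2} \sum_k e_{\theta\alpha}(\sigma_{t,k}(\bv))^2 = \theta^{-2} \bigl( d^{\theta\alpha}_t(\bv) \bigr)^2,
\]
from which taking square roots yields the claim. Because the argument is a pointwise-then-sum estimate on the singular value sequence, there is no real obstacle; the only content is the case distinction in the scalar inequality, which quantifies how the soft thresholding error at threshold $\alpha$ is controlled by the error at the smaller threshold $\theta\alpha$ up to the factor $\theta^{-1}$.
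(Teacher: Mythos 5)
Your proof is correct and takes essentially the same approach as the paper: both manipulate the identity $(d^\alpha_t)^2 = \alpha^2 r_{t,\alpha} + \tau_{t,\alpha}^2$ to quantify how replacing $\alpha$ by $\theta\alpha$ shifts mass between the two terms. The paper bounds the ratio $\bigl(d^\alpha_t / d^{\theta\alpha}_t\bigr)^2$ directly via the intermediate estimate $\tau_{t,\alpha}^2 - \tau_{t,\theta\alpha}^2 \leq \alpha^2(r_{t,\theta\alpha} - r_{t,\alpha})$, while your pointwise rewriting $(d^\alpha_t)^2 = \sum_k \min(\sigma_{t,k},\alpha)^2$ together with the scalar inequality $\min(x,\alpha) \leq \theta^{-1}\min(x,\theta\alpha)$ is a slightly cleaner, termwise presentation of exactly the same computation.
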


\begin{proof}
For the proof, we omit the dependence of quantities on $\bv$. We clearly have $r_{t, \theta\alpha} \geq r_{t,\alpha}$ and $\tau_{t, \theta\alpha} \leq \tau_{t,\alpha}$.
Furthermore,  $ \tau_{t,\alpha}^2 - \tau_{t,\theta\alpha}^2 \leq \alpha^2 ( r_{t, \theta\alpha} - r_{t, \alpha}) $, and consequently,
\[   \biggl( \frac{d^{\alpha}_t}{d^{\theta\alpha}_t  }\biggr)^2  = \frac{\alpha^2 r_{t,\alpha}  + \tau_{t,\alpha}^2 }{(\theta \alpha)^2 r_{t,\theta\alpha} + \tau_{t,\theta\alpha}^2 }   \leq    \frac{\alpha^2 r_{t,\alpha}  + \tau^2_{t,\theta\alpha} + \alpha^2 ( r_{t,\theta\alpha} - r_{t,\alpha}) }{(\theta\alpha)^2 r_{t,\theta\alpha} + \tau_{t,\theta\alpha}} \leq \theta^{-2}  \,.  \qedhere  \]
\end{proof} 

\begin{proof}[Proof of Theorem \ref{rankest2}]
\emph{Step 1:}
We first show that the condition
\begin{equation}\label{stsolve_condition}
     \norm{\bu_{k+1} - \bu_k} \leq \frac{(1 - \rho) \nu}{ \Gamma \rho} \norm{\mathbf{r}_{k+1}}
\end{equation}
in line \ref{condline} of the algorithm is always satisfied after a finite number of steps.

The combination of the first inequality of \eqref{Salphaapprox} in Lemma \ref{lmm:st} and the first inequality in \eqref{fpdist} of Lemma \ref{ualphaestimate} shows
\[    ( 1+ \rho)^{-1} \max_t d^\alpha_t(\bu^*)   \leq \norm{\bu^\alpha - \bu^*}   \,.  \]
Thus we always have $\norm{\bu^\alpha - \bu^*} > 0$ if $\alpha>0$, unless $\bu^* = 0$ and hence $\mathbf{f} = 0$. In the latter case, however, the algorithm stops immediately, and we can thus assume that $\bu^\alpha \neq \bu^*$ for any positive $\alpha$.

If $\alpha_k = \ldots = \alpha_0$, we have on the one hand
\begin{equation}\label{condleft0}
  \norm{\bu_{k+1} - \bu_k} \leq \norm{\bu_{k+1} - \bu^{\alpha_0} } + \norm{\bu_k - \bu^{\alpha_0}} \leq \rho^k ( 1 + \rho ) \norm{\bu_0 - \bu^{\alpha_0}} \,,
\end{equation}
and on the other hand, we similarly obtain
\begin{align}\label{condright0}
  \gamma^{-1} \norm{\br_{k+1}} \geq  \norm{\bu_{k+1} - \bu^*}  & \geq \norm{\bu^{\alpha_0} - \bu^* } - \rho^{k+1} \norm{\bu_0  - \bu^{\alpha_0} }  \notag \\
   & \geq (1 - \rho^{k+1}) \norm{\bu^{\alpha_0} - \bu^*} - \rho^{k+1} \norm{\bu_0 - \bu^*}      \,.
\end{align}
Thus  the right hand side in \eqref{condleft0} converges to zero, whereas the right hand side in \eqref{condright0} is bounded away from zero for sufficiently large $k$.
Hence \eqref{stsolve_condition} holds with $k = J_0 -1$ for some $J_0 \in \N$, and we assume this to be the minimum integer with this property. The thresholding parameter is then decreased for the following iteration, that is, $\alpha_{J_0} = \theta \alpha_{J_0 -1} = \theta\alpha_{0}$. 
As in \eqref{idealiter}, for $k < J_0$ we obtain
\begin{equation}\label{iterbounded}
   \norm{ \bu_{k+1} - \bu^* }  \leq  \rho^{k+1} \norm{\bu_0 - \bu^*}  + (1 + \rho^{k+1}) \norm{\bu^{\alpha_0} - \bu^*} \,.
\end{equation}
The same arguments then apply with $\alpha_{0}$ replaced by $\alpha_{J_0}$ and $\bu_0$ by $\bu_{J_0}$.
Thus $\alpha_k$ will always be decreased after a finite number of steps.

\emph{Step 2:} To show convergence of the $\bu_k$, we first observe that by our requirement that $\alpha_0 \geq E^{-1} \mu \norm{\mathbf{f}}$ (which is in fact not essential for the execution of the iteration), we actually have $\bu_1=0$ and hence $\bu^{\alpha_0} = 0$, implying also $J_0=1$. In particular,
\begin{equation}\label{prop0}
   \norm{ \bu_{n} - \bu^* } \leq \norm{  \bu^* - \bu^{\alpha_0} }  \,, \quad 0 \leq n \leq J_0 = 1\,. 
\end{equation}
We next investigate the implications of the condition \eqref{stsolve_condition} for the further iterates.
Note first that 
\begin{equation}\label{invtriangle}  \norm{\bu_{k+1} - \bu^*} - \norm{\bu_{k+1} - \bu^{\alpha_{k}}} \leq \norm{\bu^* - \bu^{\alpha_k}}  \,. \end{equation}
The standard error estimate for contractive mappings, combined with  \eqref{stsolve_condition} and \eqref{gammaass}, gives 
\begin{equation}\label{nueq}
   \norm{\bu_{k+1} - \bu^{\alpha_k}} \leq \frac{\rho}{1-\rho} \norm{\bu_{k+1} - \bu_k} \leq \frac{\nu}{\Gamma}\norm{\mathbf{r}_{k+1}} \leq   \nu \norm{\bu_{k+1} - \bu^*}  \,.
\end{equation}
Inserting the latter into \eqref{invtriangle}, we thus obtain
\begin{equation}\label{cond}    \norm{\bu_{k+1} - \bu^*} \leq (1 - \nu)^{-1} \bigl(  \norm{\bu_{k+1} - \bu^*} - \norm{\bu_{k+1} - \bu^{\alpha_k}}   \bigr)  \leq (1 - \nu)^{-1}  \norm{\bu^* - \bu^{\alpha_k}}    \,.   
\end{equation}

We introduce the following notation that groups iterates according to the corresponding values of $\alpha_k$: for each $i \in \N_0$, let $\eta_{i} := \theta^i \alpha_0$. With $\bu_{0,0} = \bu_0 =  0$ and $\br_{0,0} = - \mathbf{f}$, iterates are produced according to 
\begin{equation}\label{proofabbrev}   \bw_{i,j+1} := \bu_{i,j} - \mu \br_{i,j} , \quad  \bu_{i,j + 1} := \bS_{\alpha_{i,j}} (\bw_{i,j + 1}),  \quad  \br_{i,j+1} := \mathcal{A} \bu_{i,j+1} - \mathbf{f} \,,\end{equation}
where the index $i$ is increased each time that condition \eqref{stsolve_condition} is satisfied.
For each $i$, consistently with the previous definition of $J_0$, we define $J_i$ as the last index of an iterate produced with the value $\eta_{i}$,  which means that $\bu_{i+1,0} = \bu_{i,J_i}$ and, as a consequence of \eqref{cond},
\begin{equation}\label{propinit}
   \norm{  \bu_{i+1, 0} - \bu^* } = \norm{ \bu_{i, J_i} - \bu^* } \leq (1-\nu)^{-1} \norm{\bu^* - \bu^{\eta_{i}} }  \,, \quad i \geq 0 \,.
\end{equation}
For $i  \geq 0$ and $j=0,\ldots,J_i$, with \eqref{propinit} we obtain
\begin{align}\label{propi}
   \norm{\bu_{i,j} - \bu^*} & \leq \norm{\bu_{i,j} - \bu^{\eta_i} } + \norm{ \bu^{\eta_i} - \bu^*}   \notag  \\
      & \leq \rho^j \norm{\bu_{i,0} - \bu^*} + (1 + \rho^j) \norm{ \bu^{\eta_i} - \bu^*}  \notag  \\
      & \leq  (1 - \nu)^{-1} \rho^j \norm{\bu^{\eta_{i-1}} - \bu^*} + (1 + \rho^j) \norm{ \bu^{\eta_i} - \bu^*}  \,,
\end{align}
where we have used \eqref{prop0} in the case $i=0$.
By Remark \ref{salphaconv} this implies in particular that, in our original notation, $\bu_k \to \bu^*$.

\emph{Step 3:} Our next aim is to estimate the values of $J_i$. We have already established that $J_0 = 1$.
In order to estimate $J_i$ for $i>0$, we use \eqref{condleft0} and \eqref{condright0} to obtain
\begin{equation}
  \frac{  \norm{\bu_{i,j+1} -  \bu_{i,j}} }{  \norm{\br_{i,j+1} } } \leq \frac{   \rho^j \gamma^{-1} (1+\rho)  \norm{ \bu_{i,0} - \bu^{\eta_i}}  }{   \norm{\bu^{\eta_i} - \bu^*}  - \rho^{j+1} \norm{\bu_{i,0} - \bu^{\eta_i}}  }   
\end{equation}
for $j$ sufficiently large.
Thus \eqref{stsolve_condition} follows if the two conditions
\[
   \rho^j \gamma^{-1} (1+\rho) \frac{\norm{\bu_{i,0} - \bu^{\eta_i}}}{\norm{\bu^{\eta_i} - \bu^*}} \leq \frac{(1-\rho) \nu}{2 \Gamma\rho} \,,\quad
      \rho^{j+1} \frac{\norm{\bu_{i,0} - \bu^{\eta_i}}}{\norm{\bu^{\eta_i} - \bu^*}}  \leq \frac{1}{2}
\]
hold. These are guaranteed if
\begin{equation}\label{Jibound} 
   j \geq \abs{\ln \rho}^{-1} \biggl( C(\gamma,\Gamma,\nu) +  \ln  \frac{\norm{\bu_{i,0} - \bu^{\eta_i}}}{\norm{\bu^{\eta_i} - \bu^*}}  \biggr)  
\end{equation}
with some constant $C(\gamma,\Gamma,\nu)\geq0$.  By \eqref{propinit},
\[   \ln \frac{\norm{\bu_{i,0} - \bu^{\eta_i}}}{\norm{\bu^{\eta_i} - \bu^*}} \leq \ln \biggl( 1 + (1- \nu)^{-1} \frac{\norm{\bu^{\eta_{i-1}} - \bu^*}}{\norm{\bu^{\eta_i} - \bu^*}} \biggr) \,,  \]
and by Lemma \ref{lmm:st} and Lemma \ref{ualphaestimate},
\[    \frac{\norm{\bu^{\eta_{i-1}} - \bu^*}}{\norm{\bu^{\eta_i} - \bu^*}} \leq \frac{(1+\rho) \norm{\bS_{\eta_{i-1}}(\bu^*) - \bu^*} }{(1-\rho) \norm{\bS_{\eta_{i}}(\bu^*) - \bu^*}  }  \leq \frac{(1+\rho)}{(1-\rho)}  \sum_{t=1}^E \frac{d^{\theta^{i-1}\alpha_0}_t(\bu^*) }{ d^{\theta^i \alpha_0}_t(\bu^*) }   \leq  \frac{(1+\rho) E}{(1-\rho) \theta}   \,,  \]
where we have used that as a consequence of Lemma \ref{invbound}, the quotients $ {d^{\theta^{i-1}\alpha_0}_t(\bu^*) }/{ d^{\theta^i \alpha_0}_t(\bu^*) } $ remain uniformly bounded by $\theta^{-1}$. Putting this together with \eqref{Jibound}, we thus obtain
\[   J_i \lesssim \ln ( d) \,,  \]
with a uniform constant depending on $\gamma$, $\Gamma$, $\nu$, and $\theta$. In view of \eqref{propi}, this implies that $\bu_k$ converges to $\bu^*$ at a linear rate in cases \eqref{thmwlp} and \eqref{thmexp}.

\emph{Step 4:} In order to establish rank estimates, we need to bound the errors of $\bw_{i,j}$ as defined in \eqref{proofabbrev} for each $i \geq 0$ and $0 < j \leq J_i$. Since $\bu_0 = \bu^{\eta_0} = 0$ by our choice of $\alpha_0$, for $i=0$ we obtain
\begin{equation}\label{w0}  \norm{  \bw_{0,j}  - \bu^* } \leq \rho \norm{ \bu_{0,j-1} - \bu^* } = \rho  \norm{  \bu^{\eta_0}  - \bu^* } \,, \quad j = J_0 = 1   \,,
\end{equation}
and for $i>0$ and $j>0$, by \eqref{propi},
\begin{equation}\label{wi}
    \norm{ \bw_{i,j} - \bu^* }  \leq  \rho \norm{ \bu_{i,j-1} - \bu^*}  
      \leq  ( 1 - \nu )^{-1} \rho^{j} \norm{\bu^{\eta_{i-1}} - \bu^*} + \rho (1 + \rho^{j-1}) \norm{ \bu^{\eta_i} - \bu^*}.
\end{equation}
By Lemma \ref{thresholdinglemma}, with $M := \max_t \abs{\sigma_t(\bu^*)}_{\weakl{p}}$, for all $t$ we have
\begin{equation*}
   \rank_t (\bu_{i,j} )  \lesssim  \frac{ \norm{ \bw_{i,j} - \bu^* }^2 }{ \eta_i^2} + f(\eta_i) \,,\quad f(\eta_i) := \begin{cases}
    M^p \eta_i^{-p}, & \text{ in case \eqref{thmwlp}, }\\
    (1 + \abs{\ln\eta_i})^{\frac1\beta} ,& \text{ in case \eqref{thmexp}.}
   \end{cases}
\end{equation*}
Note that this also covers $\bu_{i,0}$ for $i \geq 0$, since $\bu_{i,0} = \bu_{i-1,J_{i-1}}$ for $i>0$ and $\bu_{0,0} = 0$.

In case \eqref{thmwlp}, as a consequence of \eqref{prop0}, \eqref{propi}, \eqref{w0}, \eqref{wi}, with Remark \ref{salphaconv}(a) we obtain
\[    \norm{\bw_{i,j} - \bu^*}   \; \lesssim \; E  M^{\frac{p}2} \eta_i^{1 - \frac{p}2}  \]
for all respective $i,j$, and consequently
\[        \rank_t (\bu_{i,j} )   \lesssim E^2 M^p \eta_i^{2-p-2} + M^p \eta_i^{-p} = (1 + E^2) M^p \eta_i^{-p}\,.   \]
By the same argument, we also have $\norm{\bu_{i,j} - \bu^*} \lesssim E M^{\frac{p}2} \eta_i^{1 - \frac{p}2}$. Setting $\varepsilon_{i,j} := M^{\frac{p}2} \eta_i^{1 - \frac{p}2}$, we thus have $ \norm{\bu_{i,j} - \bu^*} \lesssim E \varepsilon_{i,j}$
as well as
\[    \rank_t (\bu_{i,j} )   \lesssim (1+E^2)  M^{\frac{1}s} \varepsilon_{i,j}^{-\frac1s}\,,\quad s = \frac1p - \frac12 \,.   \]
This completes the proof of \eqref{thmwlp}. 
In the case \eqref{thmexp}, Remark \ref{salphaconv}(b) yields, expanding $\eta_i = \theta^i \alpha_0$,
\[    \norm{\bu_{i,j} - \bu^*}, \; \norm{\bw_{i,j} - \bu^*}   \; \lesssim \; E ( 1 + i \abs{\ln \theta} )^{\frac1{2\beta}} \theta^i \,,  \]
and hence
\[     \rank_t (\bu_{i,j} )   \lesssim E^2  ( 1 + i \abs{\ln \theta} )^{\frac1\beta }  + ( 1 + i \abs{\ln \theta})^{\frac1\beta} \,.     \]
We choose $\tilde\theta\in (\theta, 1)$ and set $\varepsilon_{i,j} := \tilde\theta^i$ to obtain $  \norm{\bu_{i,j} - \bu^*} \lesssim E \varepsilon_{i,j} $
and 
\begin{equation*}     \rank_t (\bu_{i,j} )    \lesssim E^2 \biggl[ \biggl(  1+  \frac{\abs{\ln \theta}}{ \abs{\ln \tilde\theta}}  \abs{\ln \tilde\theta^i} \biggr) \biggr]^{\frac1\beta }     \lesssim E^2 \bigl( 1+ \abs{\ln \varepsilon_{i,j}} \bigr)^{\frac1\beta}  \,.   
\end{equation*}
This completes the proof of \eqref{thmexp}.
\end{proof}

\begin{remark}
The above algorithm is universal in the sense that it does not require knowledge of the decay of the $\sigma_t(\bu^*)$, but we still obtain the same quasi-optimal rank estimates as with $\alpha_k$ prescribed a priori as in Theorem \ref{rankest1}.
Note that in \eqref{thmexp}, we have absorbed the additional logarithmic factor that is present in the error bound in \eqref{rankest1exp} by comparing to a slighly slower rate of linear convergence, but the estimates are essentially of the same type.
\end{remark}

\begin{remark}
For the effective convergence rate $\tilde\rho$ in the statement of Theorem \ref{rankest2}, as can be seen from the proof (in particular from the estimates for the $J_i$), one has an estimate from above of the form $\tilde\rho \leq \hat\rho^{\frac1{\log d}} < 1$, where $\hat \rho$ does not explicitly depend on $d$ (although it may still depend on $d$ through other quantities such as $\gamma$, $\Gamma$). Consequently, combining this with the statements in \eqref{thmwlp} and \eqref{thmexp}, we generally have to expect that the number of iterations required to ensure $\norm{\bu_k - \bu^*} \leq \varepsilon$ scales like $\abs{\log \hat \rho}^{-1} \bigl( (\log \varepsilon + \log d ) \log d\bigr)$.
\end{remark}

\subsection{Inexact Evaluation of Residuals}

We finally consider a perturbed version of Algorithm \ref{alg:stsolve} where residuals are no longer evaluated exactly, but only up to a certain relative error.
We assume that for each given $\bv$ and $\delta > 0$, we can produce $\mathbf{r}$ such that $\norm{\mathbf{r} - (\Acal \bv - \mathbf{f})} \leq \delta$. 

We will show below that for our purposes it suffices to ensure a certain relative error for each $\br_k$ computed in Algorithm \ref{alg:stsolve}, more precisely, to adjust $\delta$ for each $k$ such that 
\[  \norm{\mathbf{r}_k - (\Acal \bu_k - \mathbf{f})} \leq \min\{ \tau_1 \norm{\mathbf{r}_k},  \tau_2 \mu^{-1} \norm{\bu_{k+1} - \bu_{k}} \} \,, \]
with suitable $\tau_1, \tau_2 > 0$.
This can be achieved by simply decreasing the value of $\delta$ and recomputing $\br_k$ (and the resulting $\bu_{k+1}$) until $\delta \leq \min\{ \tau_1 \norm{\mathbf{r}_k},  \tau_2 \mu^{-1}\norm{\bu_{k+1} - \bu_{k}} \}$ is satisfied. With such a choice of $\delta$, we then have in particular
\begin{equation}\label{residualrelative}
  (1-\tau_1) \norm{\mathbf{r}_k} \leq \norm{\Acal \bu_k - \mathbf{f}} \leq (1+\tau_1) \norm{\mathbf{r}_k} \,.
\end{equation}
Our scheme can be regarded as an extension of the residual evaluation strategy used in \cite{GHS} in the context of an adaptive wavelet scheme, where the residual error is controlled relative to the norm of the computed residual.
Note that in our algorithm, the error tolerance $\delta_k$ used for each computed $\br_k$ is adjusted twice: first in line \ref{residual1line} to ensure the accuracy with respect to $\norm{\br_k}$, and possibly a second time in line \ref{residual2line} (after incrementing $k$) to ensure the accuracy with respect to $\norm{\bu_{k+1} - \bu_k}$.

The analysis of the resulting modified Algorithm \ref{alg:iestsolve} follows the same lines as the proof of Theorem \ref{rankest2}, and we obtain the same statements with modified constants.
We do not restate the full proof, but instead indicate how the central estimates are modified. For a given iterate $\bu_k$, we now denote the exact residual by $\mathbf{\bar r}_k := \Acal \bu_k - \mathbf{f}$ and the computed residual by $\mathbf{r}_k$. 

We first consider the influence of the perturbation on the iteration without thresholding, but with inexact residual, for which we obtain
\begin{equation*}
   \norm{ (  \bu_k - \mu \br_k  )  - \bu^* } \leq \norm{ (\bu_k - \mu \mathbf{\bar r}_k) - \bu^* } + \mu \norm{\br_k - \mathbf{\bar r}_k } \leq 
     \rho \norm{\bu_k - \bu^*} + \tau_1 \mu \norm{\br_k}    \,. 
\end{equation*}
Using $\norm{\br_k } \leq (1 - \tau_1)^{-1} \norm{\mathbf{\bar r}_k } \leq ( 1- \tau_1)^{-1} \Gamma \norm{\bu_k - \bu^*}$ and $\mu \Gamma\leq 2$,
\begin{equation}\label{perturbedcontr}
   \norm{ (  \bu_k - \mu \br_k  )  - \bu^* } \leq  \biggl( \rho + \frac{ 2 \tau_1 } { 1 - \tau_1}  \biggr) \norm{\bu_k - \bu^*} \,,
\end{equation}
which yields a contraction provided that $\tau_1 < ( 3 - \rho)^{-1} (1 - \rho)$.  However, in \eqref{w0} and \eqref{wi}, where the bound \eqref{perturbedcontr} is required, $\tau_1 < 1$ is in fact sufficient.

We now turn to the contractivity of the iteration with thresholding. Note that 
\begin{equation}\label{iecontr}
   \norm{ \bu_{k+1} - \bu_k } \leq
     \norm{ \bu_{k + 1} - \bS_{\alpha_k}(\bu_k - \mu \mathbf{\bar r}_k) }  +
       \norm{\bS_{\alpha_k}(\bu_k - \mu \mathbf{\bar r}_k) - \bu^{\alpha_k} }  +  \norm{\bu^{\alpha_k} - \bu_k}  \,,
\end{equation}
where by non-expansiveness of $\bS_{\alpha_k}$,
\begin{equation*}
  \norm{ \bu_{k + 1}  -  \bS_{\alpha_k}(\bu_k - \mu \mathbf{\bar r}_k) } \leq \mu \norm{\br_k - \mathbf{\bar r}_k} \,,
\end{equation*}
and thus, since $\mu \norm{\br_k - \mathbf{\bar r}_k} \leq \tau_2 \norm{\bu_{k+1} - \bu_{k}}$ by our construction, \eqref{iecontr} gives
\begin{equation*}
  \norm{\bu_{k+1} - \bu_k} \leq \frac{1+\rho}{1- \tau_2} \norm{ \bu_k - \bu^{\alpha_k} } \,.
\end{equation*}
As a consequence,
\begin{align}
  \norm{\bu_{k+1} - \bu^{\alpha_k} } & \leq  \norm{ \bS_{\alpha_k}(\bu_k - \mu \mathbf{\bar r}_k)  -  \bu^{\alpha_k} }  + \norm{\bS_{\alpha_k} ( \bu_k - \mu \br_k) - \bS_{\alpha_k}(\bu_k - \mu \mathbf{\bar r}_k )}  \notag  \\[2pt]
  & \leq \rho \norm{\bu_k - \bu^{\alpha_k}} + \mu \norm{\br_k - \mathbf{\bar r}_k}   \notag \\
  & \leq \hat\rho(\tau_2) \norm{\bu_k - \bu^{\alpha_k}} \,, \qquad \hat\rho(\tau_2) := \rho + \frac{(1+\rho)\tau_2}{1-\tau_2} \,, \label{mod_contr}
\end{align}
where $\hat\rho(\tau_2) <1$ holds precisely when $\tau_2 < \frac12 (1-\rho)$; in other words, the perturbed fixed point iteration then has the same contractivity property with a modified constant. 

Furthermore, we show next that the validity of the modified condition
\begin{equation} \label{mod_stsolve_condition}
  \norm{\bu_{k+1} - \bu_{k}} \leq B \norm{\mathbf{r}_{k+1}} \,, \quad 
    B :=  \frac{(1 - \rho)(1-\tau_1) \nu}{  (1+\tau_2) (\rho  + (1-\tau_2)^{-1}(1+\rho)\tau_2) \Gamma } \,,
\end{equation}
in Algorithm \ref{alg:iestsolve} still implies that the corresponding iterates satisfy \eqref{cond}.

\begin{algorithm}[!h]
\caption{$\quad \mathbf{u}_\varepsilon = \iesolve(\Acal, \mathbf{f};  \varepsilon)$} 
\begin{algorithmic}[1]
\Require $\mu$, $\rho$ as in \eqref{contr}, arbitrary $\nu, \theta, \omega \in (0,1)$, $\alpha_0 \geq E^{-1} \mu \norm{\mathbf{f}}$, 
\Statex $\tau_1 \in (0, 1)$, $\tau_2 \in (0, \frac12 (1-\rho))$,
$B$ as in \eqref{mod_stsolve_condition}, $D$ as in \eqref{const2}.
\Ensure $\mathbf{u}_\varepsilon$ satisfying $\norm{\mathbf{u}_\varepsilon - \mathbf{u}^*}\leq \varepsilon$.
\State $\mathbf{u}_0 := 0$, $\mathbf{r}_{0} := - \mathbf{f}$
\State $k:= 0$, $\delta_0 :=  \tau_1 \norm{\br_0} $
\While{$\norm{\mathbf{r}_k}  + \delta_k >  \gamma \varepsilon$}
\State $\mathbf{u}_{k+1} := \mathbf{S}_{\alpha_k} \bigl(\mathbf{u}_{k} - \mu \mathbf{r}_{k} \bigr)$
\While{$\delta_k >  \tau_2 \mu^{-1}\norm{\mathbf{u}_{k+1} - \bu_k}$  ~ $\wedge$ ~ $\displaystyle \delta_k > D \norm{\br_k} $ with $D$ as in \eqref{const2}}   \label{secondetacondline}
\State $\delta_{k} \gets \omega \delta_{k}$
\State compute $\br_{k}$ such that $\norm{\mathbf{r}_{k} -  (\mathcal{A}\mathbf{u}_{k} -\mathbf{f})} \leq \delta_{k}$ \label{residual2line}
\State $\mathbf{u}_{k+1} \gets \mathbf{S}_{\alpha_k} \bigl(\mathbf{u}_{k} - \mu \mathbf{r}_{k} \bigr)$
\EndWhile
\State $\delta_{k+1} :=  \omega^{-1} \delta_{k}$
\Repeat
\State $\delta_{k+1} \gets \omega \delta_{k+1}$
\State compute $\br_{k+1}$ such that $\norm{\mathbf{r}_{k+1} -  (\mathcal{A}\mathbf{u}_{k+1} -\mathbf{f})} \leq \delta_{k+1}$ \label{residual1line}
\If{$\norm{\br_{k+1}} + \delta_{k+1} \leq  \gamma\varepsilon$} \label{residualexitline}
\State set $\mathbf{u}_\varepsilon := \mathbf{u}_{k+1}$ and stop
\EndIf
\Until{ $\delta_{k+1} \leq \tau_1 \norm{\br_{k+1}}$ }
\If{$\displaystyle  \norm{\bu_{k+1} - \bu_{k}} \leq B \norm{\mathbf{r}_{k+1}} $ with $B$ as in \eqref{mod_stsolve_condition},} \label{mstcline}
\State $\alpha_{k+1} := \theta \alpha_{k}$
\State $\delta_{k+1} \gets \tau_1 \norm{\br_{k+1}}$\label{resetline}
\Else
\State $\alpha_{k+1} := \alpha_{k}$
\EndIf 
\State $k \gets k+1$
\EndWhile
\State $\mathbf{u}_\varepsilon := \mathbf{u}_k$ 
\end{algorithmic}
\label{alg:iestsolve}
\end{algorithm}

To this end, as in the proof of Theorem \ref{rankest2}, it suffices to show that \eqref{mod_stsolve_condition} implies $\norm{\bu_{k+1} - \bu^{\alpha_k}} \leq \nu \norm{\bu_{k+1} - \bu^*}$. On the one hand, by the construction of $\br_k$ and the standard error estimate for fixed point iterations, we have
\begin{align*}
  \norm{\bu_{k+1} - \bu^{\alpha_k}} & \leq \hat\rho(\tau_2) \norm{\bu_k - \bu^{\alpha_k}} \leq \frac{\hat\rho(\tau_2)}{1 - \rho} \norm{\bS_{\alpha_k} (\bu_k - \mu \mathbf{\bar r}_k ) - \bu_k } \\ 
  & \leq \frac{\hat\rho(\tau_2) (1 + \tau_2)}{1-\rho} \norm{\bu_{k+1} - \bu_k} \,,
\end{align*}
and on the other hand, by the construction of $\br_{k+1}$, 
\begin{equation*}
  \norm{\bu_{k+1} - \bu^*} \geq \Gamma^{-1} \norm{\mathbf{\bar r}_{k+1}} \geq  (1- \tau_1) \Gamma^{-1} \norm{\br_{k+1}} \,.
\end{equation*}
Combining these two estimates, we find that \eqref{mod_stsolve_condition} implies \eqref{cond}. With this implication and the modified estimates \eqref{perturbedcontr} and \eqref{mod_contr}, one can now follow the proof of Theorem \ref{rankest2} to obtain the same statements.

There are two additional checks in the algorithm to ensure that the $\delta_k$ cannot become arbitrarily small. On the one hand, when the condition in line \ref{residualexitline} of Algorithm \ref{alg:iestsolve} is satisfied, then $\norm{\Acal \bu_{k+1} - \mathbf{f}} \leq \gamma \varepsilon$, which implies $\norm{\bu_{k+1}-\bu^*} \leq \varepsilon$, and we can therefore stop the iteration. 

On the other hand, if the loop in line \ref{secondetacondline} exits because the second condition with the constant
\begin{equation}\label{const2}
  D := \min \biggl\{  \frac{(1-\tau_1) \tau_2 B}{(1 + \tau_1 + \Gamma B )\mu}   \, ,    \;
    \frac{  \rho \nu \tau_2 (1-\tau_1)^2   }{  \bigl(  \rho(1+\tau_1)(1+\tau_2)  + \nu (1-\tau_1) (1-\rho) \bigr)  \mu }   \biggr\}  
\end{equation}
is violated, that is, if
\begin{equation}\label{viol2}
 \delta_k \leq D \norm{\br_k} \,,
\end{equation}
then the condition $\norm{ \bu_{k+1} - \bu^{\alpha_k} } \leq \nu \norm{\bu_{k+1} - \bu^*}$ as in \eqref{nueq} is satisfied and condition \eqref{mod_stsolve_condition} in line \ref{mstcline} is guaranteed to hold, which means that $\alpha_{k}$ will be decreased.
To see this, note first that
\begin{equation} \label{newreslower}
  \norm{\br_{k+1}} \geq (1 + \tau_1)^{-1} \bigl(  (1 - \tau_1) \norm{\br_k}  - \Gamma \norm{\bu_{k+1} - \bu_k }  \bigr) \,,  
\end{equation}
and since the first condition in line \ref{secondetacondline} still holds, we have $\norm{\bu_{k+1} - \bu_k} \leq \mu \tau_2^{-1} \delta_k$. Therefore \eqref{viol2} implies in particular
\[     \bigl(  1+ (1 + \tau_1)^{-1} \Gamma B \bigr)  \norm{\bu_{k+1} - \bu_k}  \leq  B ( 1 + \tau_1)^{-1} (1-\tau_1) \norm{\br_k} \,,     \]
which combined with \eqref{newreslower} implies \eqref{mod_stsolve_condition}. Furthermore, \eqref{viol2} also yields, with the second case in the minimum in \eqref{const2}, the estimate
\begin{equation}\label{viol2-2}    \frac{\rho}{1 - \rho} (\tau_2^{-1} \mu + \mu) \delta_k  \leq \nu \Gamma^{-1} ( 1 - \tau_1 ) ( 1 + \tau_1)^{-1} \bigl( (1 - \tau_1) \norm{\br_k} - \Gamma \mu  \tau_2^{-1} \delta_k \bigr) \,.   
\end{equation}
Since $\nu \norm{\bu_{k+1} - \bu^*} \geq \nu \Gamma^{-1} \norm{\mathbf{\bar{r}}_k } \geq \nu\Gamma^{-1} (1-\tau_1) \norm{\br_{k+1}}$, by \eqref{newreslower} the right hand side in \eqref{viol2-2} can be estimated from above by $\nu \norm{\bu_{k+1} - \bu^*}$. For the left hand side, we have
\begin{multline*}
  \frac{\rho}{1 - \rho} (\tau_2^{-1} \mu + \mu) \delta_k \geq  \frac{\rho}{1 - \rho} \bigl( \norm{\bu_{k+1} - \bu_k}  +  \mu \norm{\br_k - \mathbf{\bar r}_k}  \bigr)
     \\    \geq \frac{\rho}{1 - \rho}  \norm{\bS_{\alpha_k} ( \bu_k - \mu \mathbf{\bar r}_k) - \bu_k}  \geq \norm{\bu_{k+1} - \bu^{\alpha_k}}  \,,
\end{multline*}
and from \eqref{viol2-2} altogether we obtain $\norm{ \bu_{k+1} - \bu^{\alpha_k} } \leq \nu \norm{\bu_{k+1} - \bu^*}$ as required.

Note that as a consequence of this construction, the $\delta_k$ obtained in Algorithm \ref{alg:iestsolve} remain proportional to $\norm{\br_k}$ during the iteration.

\section{Numerical Experiments}\label{sec:numexp}

In principle, Algorithms \ref{alg:stsolve} and \ref{alg:iestsolve} can be applied to quite general discretized elliptic problems, since only bounds on the spectrum of the discrete operator $\Acal$ are required. For our numerical tests, we choose a particular setting where we have a suitable method for preconditioning with explicit control of the resulting condition numbers available: we test Algorithm \ref{alg:iestsolve} on a discretized Poisson problem with homogeneous Dirichlet boundary conditions
\[   -\Delta u = f \quad \text{on $(0,1)^d$,}  \]
using similar techniques as for the adaptive treatment in \cite{BD2}, with the difference that we now use a wavelet Galerkin discretization with the basis functions chosen in advance. 

We shall now briefly describe how the discrete operator $\Acal$ is obtained as a symmetrically preconditioned Galerkin discretization in a tensor product wavelet basis. Starting from an orthonormal basis of sufficiently regular (multi-)wavelets $\{ \psi_\nu \}_{\nu \in\nabla}$ of $L^2(0,1)$, we obtain a tensor product orthonormal basis $\{ \Psi_\nu \}_{\nu \in \nabla^d}$ of $L^2((0,1)^d)$ with $\Psi_\nu := \psi_{\nu_1}\otimes \cdots\otimes \psi_{\nu_d}$, such that the rescaled basis functions $\omega_\nu^{-1} \Psi_\nu$, $\nu\in\nabla^d$, where
\[    \omega_\nu := \Bigl(  \norm{\psi_{\nu_1}}^2_{H^1_0(0,1)}  + \ldots +  \norm{\psi_{\nu_d}}^2_{H^1_0(0,1)} \Bigr)^{\frac12} \,,   \]
form a Riesz basis of $H^1_0((0,1)^d)$. We now pick a fixed finite subset $\Lambda_1 \subset \nabla$ and set $\Lambda := \Lambda_1 \times \cdots\times \Lambda_1 \subset \nabla^d$. Furthermore, we use the family of low-rank approximate diagonal scaling operators $\mathbf{\tilde S}_n^{-1}$, $n\in\N$, constructed in \cite{BD2}: we choose a $\bar{\delta}\in(0,1)$ and then take $\bar{n}$ according to \cite[Theorem 4.1]{BD2} such that 
\[   (1- \bar{\delta}) \bignorm{\diag(\omega_\nu^{-1}) \bv } \leq \bignorm{\mathbf{\tilde S}^{-1}_{\bar{n}} \bv} \leq (1+\bar{\delta}) \bignorm{\diag(\omega_\nu^{-1}) \bv }  \]
for all sequences $\bv$ supported on $\Lambda$. With
\begin{equation*}
   \mathbf{\hat T}_\Lambda := \Bigl(  \sum_{i=1}^d \langle \partial_i \Psi_\nu,\partial_i \Psi_\mu\rangle_{L^2}  \Bigr)_{\lambda,\nu\in\Lambda}  \,,\quad 
     \mathbf{\hat f}_\Lambda := \bigl(  \langle f, \Psi_\nu\rangle  \bigr)_{\nu\in\Lambda}\,,
\end{equation*}
we then set
\begin{equation*}
  \Acal :=  \mathbf{\tilde S}^{-1}_{\bar{n}} \mathbf{\hat T}_\Lambda \mathbf{\tilde S}^{-1}_{\bar{n}} \,,\quad \mathbf{f} := \mathbf{\tilde S}^{-1}_{\bar{n}} \mathbf{\hat f}_\Lambda \,.
\end{equation*}
Thus $\bu^* = \mathbf{\tilde S}_{\bar{n}} \mathbf{\hat T}_\Lambda^{-1}\mathbf{\hat f}_\Lambda$, where the additional scaling by $\mathbf{\tilde S}_{\bar{n}}$ yields convergence of the scheme in $H^1$-norm at a controlled rate. An approximation of $\mathbf{\hat T}_\Lambda^{-1}\mathbf{\hat f}_\Lambda$, which in turn is a Galerkin approximation of the sequence of $L^2$-coefficients $\langle u,\Psi_\nu\rangle$ of the true solution, can then be recovered by applying $\mathbf{\tilde S}^{-1}_{\bar{n}}$ to the computed $\bu_\varepsilon$. For $\Acal$, one can obtain accurate bounds for $\gamma$, $\Gamma$, and in particular,
\[   \kappa \leq \frac{(1+\bar{\delta})^2}{(1-\bar{\delta})^2}  \operatorname{cond}_2 \bigl(  \omega_\lambda^{-1} \langle \psi_\lambda', \psi_\nu'\rangle \omega_\nu^{-1} \bigr)_{\lambda,\nu\in\Lambda_1}  \,.  \]
In our numerical tests, as in \cite{BD2} we take $f = 1$ and use the piecewise polynomial, continuously differentiable orthonormal multiwavelets of order 7 constructed in \cite{DGH:99}. The univariate index set $\Lambda_1$ comprises all multiwavelet basis functions on levels $0,\ldots, 4$, which yields $\#(\Lambda_1) = 224$. The unspecified constants in Algorithm \ref{alg:iestsolve} are chosen as $\theta := \frac34$, $\omega := \frac12$, $\nu := \frac9{10}$, $\alpha_0 := \frac12 \mu \norm{\mathbf{f}}$, and we take $\bar{\delta} := \frac1{10}$.

Note that since the resulting diagonal scalings $\mathbf{\tilde S}^{-1}_{\bar{n}}$ consist of 10 separable terms, a naive direct application of $\Acal$ could increase the hierarchical ranks of a given input $\bv$ by a factor of up to 200; the observed ranks required for accurately approximating $\Acal\bv$, however, are much lower.
Therefore we use the recompression strategy described in \cite[Section 7.2]{BD2} for an approximate evaluation of $\Acal \bv$ with prescribed tolerance in order to avoid unnecessarily large ranks in intermediate quantities. In this setting, the inexact residual evaluation in Algorithm \ref{alg:iestsolve} is thus of crucial practical importance.

We compare the computed solutions to a very accurate reference solution of the discrete problem obtained by an exponential sum approximation $\mathbf{\hat u}_0 \approx \mathbf{\hat T}_\Lambda^{-1}\mathbf{\hat f}_\Lambda$, see \cite{Grasedyck:04,Hackbusch:05}. The error to the reference solution is computed as $\text{err}_k := \norm{\diag(\omega_\nu) (\mathbf{\tilde S}^{-1}_{\bar{n}} \bu_k - \mathbf{\hat u}_0)}$, which is proportional to the error in $H^1$-norm of the corresponding represented functions. The quantity $\text{err}_k$ thus serves as a substitute for the difference in the relevant norm of $\bu_k$ to the exact solution of the discretized problem.

\begin{figure}[ht]
\begin{tabular}{ccc}
\hspace{-20pt}  \includegraphics[width=5.4cm]{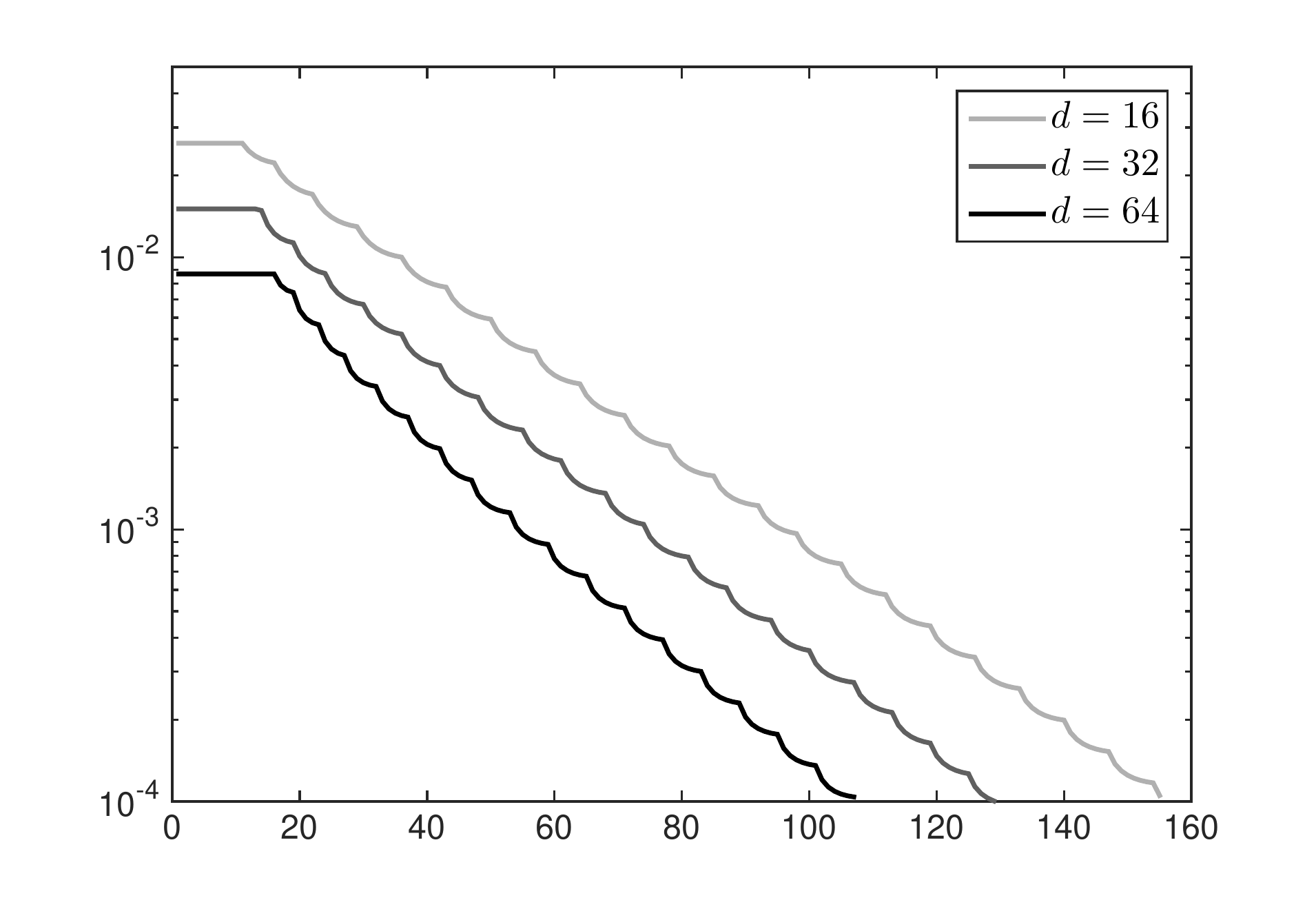}  &  \hspace{-24pt}\includegraphics[width=5.4cm]{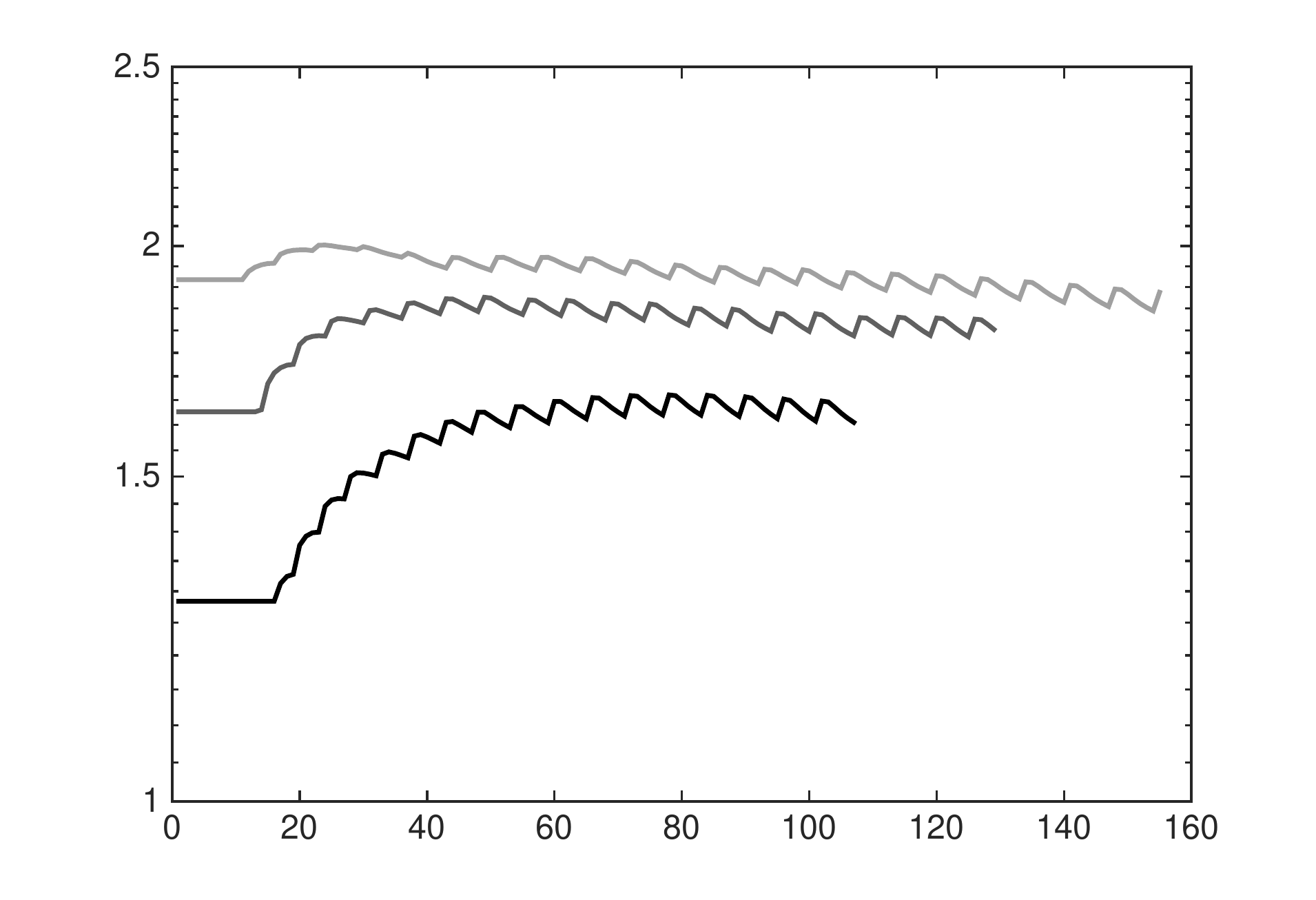} & \hspace{-20pt}\includegraphics[width=5.4cm]{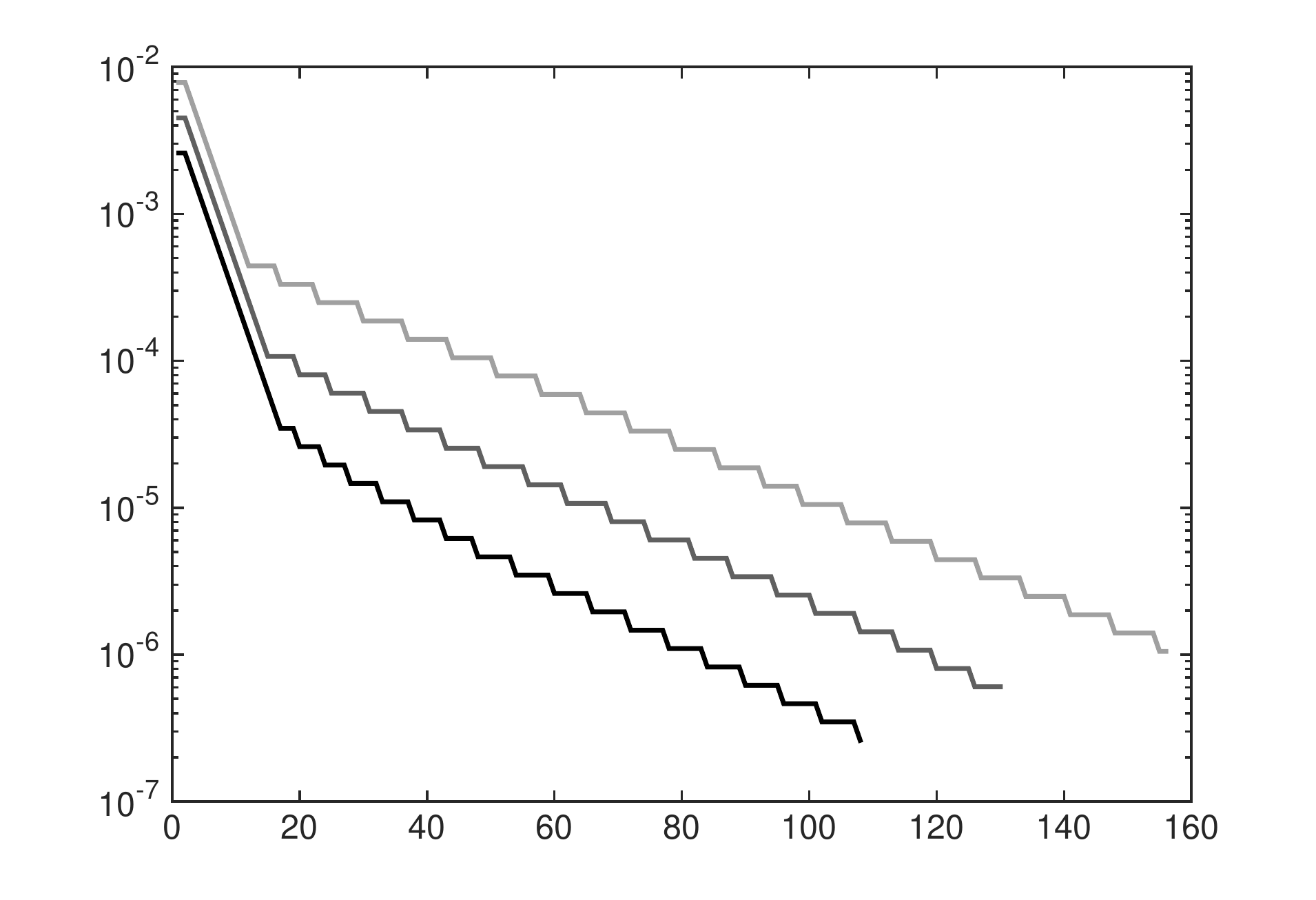} \\[-6pt]
    $\norm{\br_k}$   &  $\text{err}_k / \norm{\br_k} $  & $\alpha_k$
\end{tabular}\vspace{-3pt}
\caption{From left to right (versus iteration number $k$): computed discrete residual norms $\norm{\br_k}$, ratios of differences to reference solution $\text{err}_k$ to $\norm{\br_k}$, corresponding thresholding parameters $\alpha_k$; each for $d=16$ (light grey), $d=32$ (dark grey), $d=64$ (black).}\label{fig:res}
\end{figure}
\begin{figure}[ht]
\begin{tabular}{ccc}
\hspace{-22pt}  \includegraphics[width=5.4cm]{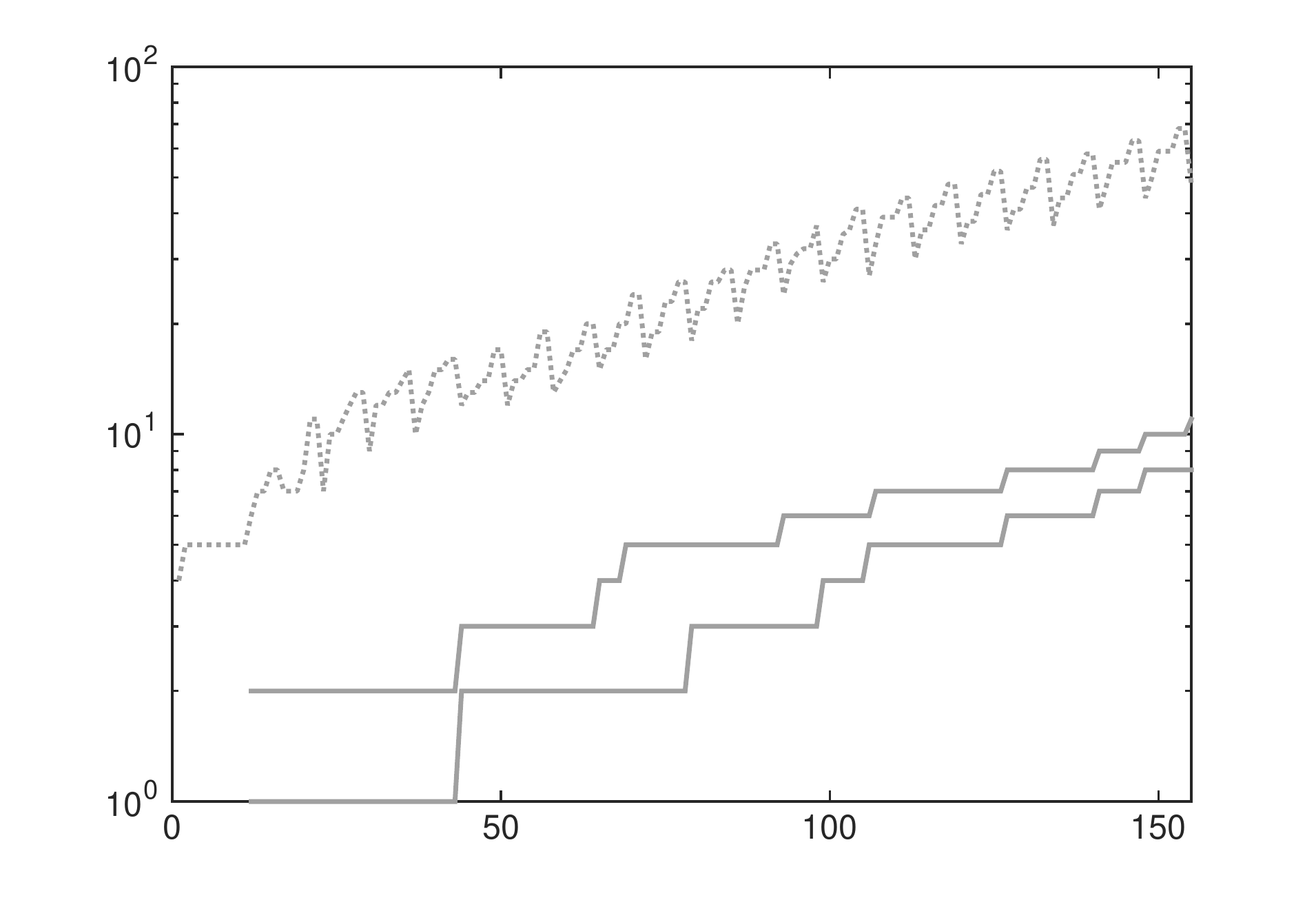}  & \hspace{-24pt} \includegraphics[width=5.4cm]{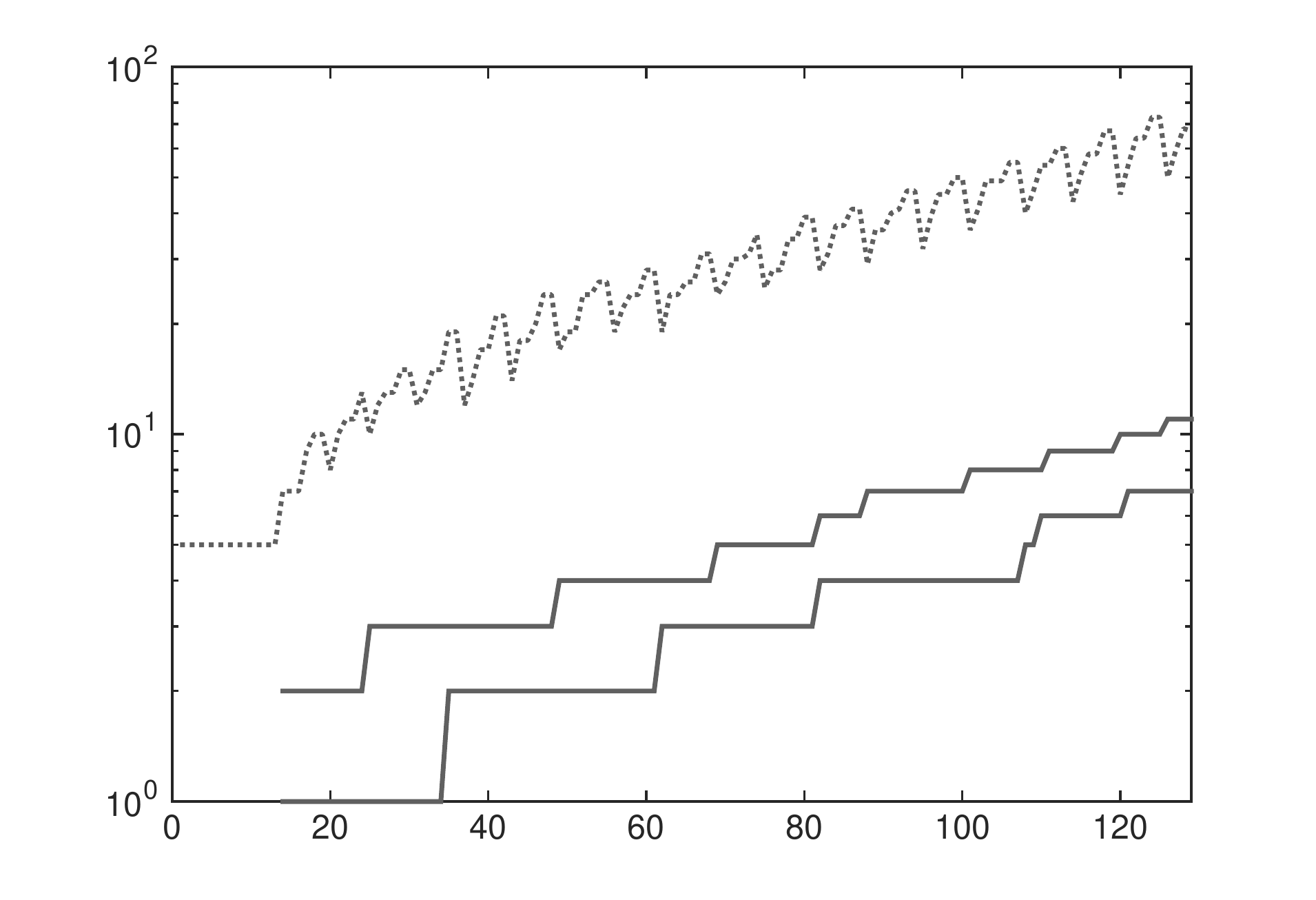} & \hspace{-26pt} \includegraphics[width=5.4cm]{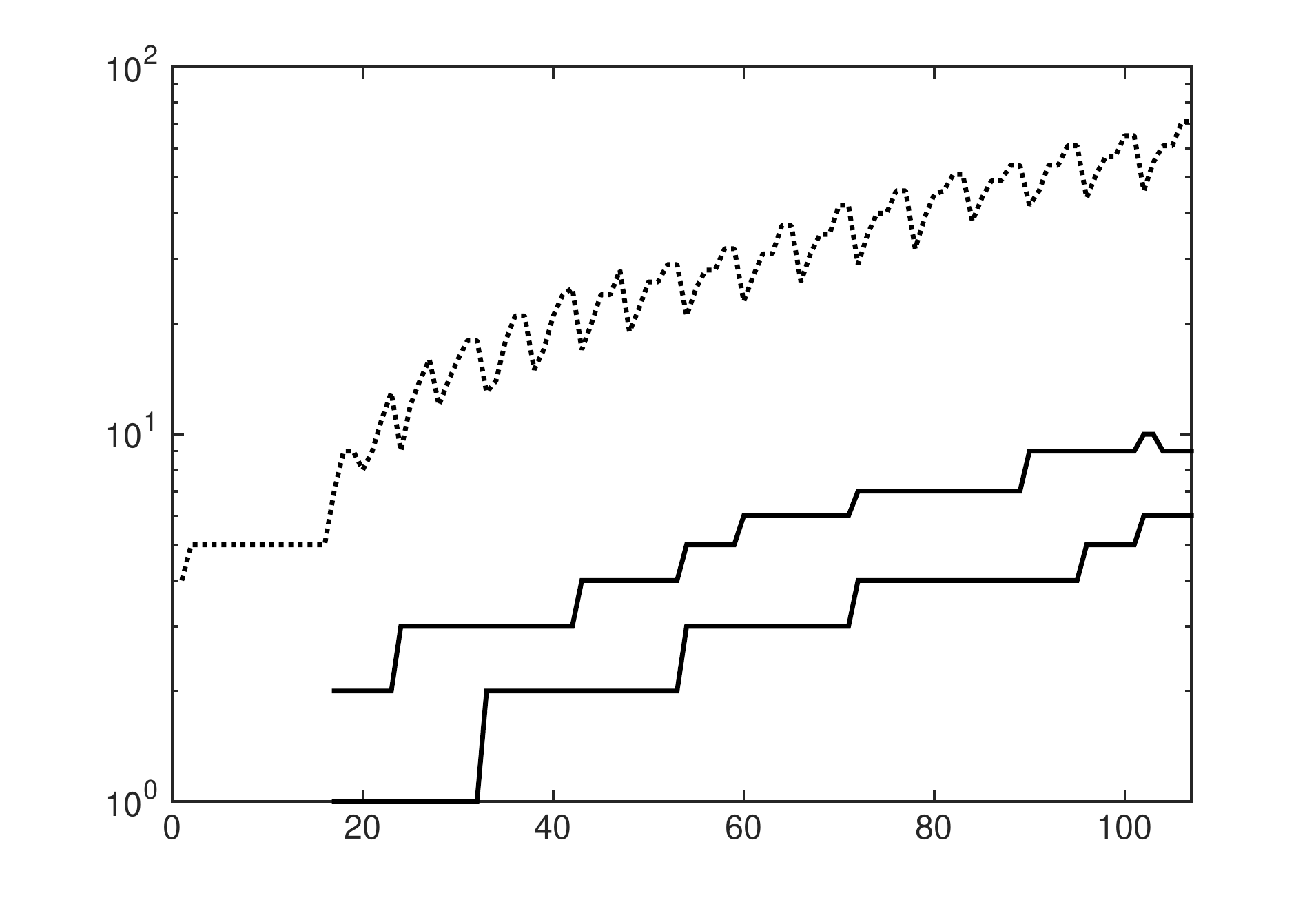} \\[-6pt]
    $d=16$   &  $d=32$  & $d=64$
\end{tabular}\vspace{-6pt}
\caption{Solid lines: maximum and minimum hierarchical ranks of iterates, i.e., $\min_t \rank_t(\bu_k)$ and $\max_t \rank_t(\bu_k)$; dotted lines: maximum ranks of computed residuals, $\max_t \rank_t(\br_k)$; each versus $k$.}\label{fig:ranks}
\end{figure}

The numerical results for $d=16$, $d=32$, and $d=64$ are shown in Figures \ref{fig:res} and \ref{fig:ranks}. It can be observed in Figure \ref{fig:res} that the norm of the solution of the problem decreases slightly with increasing $d$; apart from this, the iteration behaves very similarly for the different values of $d$, producing in particular a monotonic decrease of discrete residual norms. As expected, these values also remain uniformly proportional, up to very moderate constants, to the $H^1$-difference to the reference solution. The values $\alpha_k$ can be seen to first decrease in every step as long as $\bu_k = 0$; subsequently, they decrease in a regular manner after an essentially constant number of iterations. As one would also expect, the final value of $\alpha_k$ needs to be slightly smaller for larger $d$.

Figure \ref{fig:ranks} shows the maximum and minimum hierarchical ranks of the computed iterates (whose difference grows slightly with increasing $d$) compared to the ranks of the corresponding computed discrete residuals $\br_k$, clearly demonstrating the reduced rank increase relative to $\bu_k$ that we obtain by the approximate residual evaluation. The additional variation in the residual ranks is a consequence of the fact that the the differences $\norm{\bu_{k+1} - \bu_k}$ decrease as long as $\alpha_k$ remains constant, enforcing a more accurate residual evaluation. As soon as the thresholding parameter changes, the accuracy requirement is subsequently relaxed again by line \ref{resetline} in Algorithm \ref{alg:iestsolve}, since the values $\norm{\bu_{k+1} - \bu_k}$ are again increased when $\bu^{\alpha_k}$ changes. Note furthermore that the ranks show little variation with increasing $d$, which is substantially more favorable than the quadratic increase with $d$ that is possible in the estimates \eqref{thmwlp} and \eqref{thmexp} of Theorem \ref{rankest2}.

\section{Conclusion}

We have constructed an iterative scheme for solving linear elliptic operator equations in hierarchical tensor representations. This method guarantees linear convergence to the solution $\bu^*$ as well as quasi-optimality of the tensor ranks of all iterates, and is universal in the sense that no a priori knowledge on the tensor approximability of $\bu^*$ is required.

However, if it is known that the hierarchical singular values of $\bu^*$ have, for instance, exponential-type decay, then Theorem \ref{rankest1} shows that one can obtain the same properties by a priori prescribing $\alpha_k$ that decrease geometrically at some rate $\tilde\rho > \rho$. In such a setting, this simpler approach with a priori choice may thus be a viable alternative.

Since the given a priori choices of thresholding parameters work for quite general contractive fixed point mappings, the construction of schemes that make this choice a posteriori may be possible for more general cases than the linear elliptic one treated here. In this regard, note that although we have always assumed for ease of presentation that the considered operator $\Acal$ is also symmetric, this is not essential.

In this work, we have considered fixed discretized problems, but we expect that the basic strategy proposed here can also be used in the context of adaptive discretizations.
Moreover, there may exist other related soft thresholding procedures for tensors than the sequential approach underlying our construction that retain the required features.

\bibliography{st}

\begin{thebibliography}{10}

\bibitem{BD2}
M.~Bachmayr and W.~Dahmen.
\newblock Adaptive low-rank methods: Problems on {S}obolev spaces.
\newblock arXiv:1407.4919 [math.NA], 2014.

\bibitem{BD1}
M.~Bachmayr and W.~Dahmen.
\newblock Adaptive near-optimal rank tensor approximation for high-dimensional
  operator equations.
\newblock To appear in Foundations of Computational Mathematics, DOI
  10.1007/s10208-013-9187-3, 2014.

\bibitem{BeckTeboulle}
A.~Beck and M.~Teboulle.
\newblock A fast iterative shrinkage-thresholding algorithm for linear inverse
  problems.
\newblock {\em SIAM J.\ Imaging Sciences}, 2:182--202, 2009.

\bibitem{BrediesLorenz:08}
K.~Bredies and D.~A. Lorenz.
\newblock Linear convergence of iterative soft-thresholding.
\newblock {\em J Fourier Anal Appl}, 14:813--837, 2008.

\bibitem{CaiCandesShen:08}
J.-F. Cai, E.~J. Cand{\`e}s, and Z.~Shen.
\newblock A singular value thresholding algorithm for matrix completion.
\newblock {\em SIAM J Optimization}, 20:1956--1982, 2008.

\bibitem{Cohen:01}
A.~Cohen, W.~Dahmen, and R.~DeVore.
\newblock Adaptive wavelet methods for elliptic operator equations:
  {C}onvergence rates.
\newblock {\em Mathematics of Computation}, 70(233):27--75, 2001.

\bibitem{DahlkeFornasierRaasch:12}
S.~Dahlke, M.~Fornasier, and T.~Raasch.
\newblock Multilevel preconditioning and adaptive sparse solution of inverse
  problems.
\newblock {\em Math Comp}, 81:419--446, 2012.

\bibitem{DDGS:14}
W.~Dahmen, R.~DeVore, L.~Grasedyck, and E.~S{\"u}li.
\newblock Tensor-sparsity of solutions to high-dimensional elliptic partial
  differential equations.
\newblock arXiv:1407.6208 [math.NA], 2014.

\bibitem{Daubechiesetal:08}
I.~Daubechies, M.~Defrise, and C.~De~Mol.
\newblock An iterative thresholding algorithm for linear inverse problems with
  a sparsity constraint.
\newblock {\em Communications on Pure and Applied Mathematics},
  57(11):1413--1457, 2004.

\bibitem{Devore:98}
R.~DeVore.
\newblock Nonlinear approximation.
\newblock {\em Acta Numerica}, pages 51--150, 1998.

\bibitem{DGH:99}
G.~C. Donovan, J.~S. Geronimo, and D.~P. Hardin.
\newblock Orthogonal polynomials and the construction of piecewise polynomial
  smooth wavelets.
\newblock {\em SIAM J. Math. Anal.}, 30:1029--1056, 1999.

\bibitem{GHS}
T.~Gantumur, H.~Harbrecht, and R.~Stevenson.
\newblock An optimal adaptive wavelet method without coarsening of the
  iterands.
\newblock {\em Mathematics of Computation}, 76(258):615--629, 2007.

\bibitem{Grasedyck:04}
L.~Grasedyck.
\newblock Existence and computation of low {K}ronecker-rank approximations for
  large linear systems of tensor product structure.
\newblock {\em Computing}, 72:247--265, 2004.

\bibitem{Grasedyck:10}
L.~Grasedyck.
\newblock Hierarchical singular value decomposition of tensors.
\newblock {\em SIAM J. Matrix Anal. Appl.}, 31(4):2029--2054, 2010.

\bibitem{Hackbusch:05}
W.~Hackbusch.
\newblock {E}ntwicklungen nach {E}xponentialsummen.
\newblock Technical Report~4, MPI Leipzig, 2005.

\bibitem{Hackbusch:12}
W.~Hackbusch.
\newblock {\em Tensor Spaces and Numerical Tensor Calculus}, volume~42 of {\em
  Springer Series in Computational Mathematics}.
\newblock Springer-Verlag Berlin Heidelberg, 2012.

\bibitem{Hackbusch:09}
W.~Hackbusch and S.~K{\"u}hn.
\newblock A new scheme for the tensor representation.
\newblock {\em Journal of Fourier Analysis and Applications}, 15(5):706--722,
  2009.

\bibitem{HRS}
S.~Holtz, T.~Rohwedder, and R.~Schneider.
\newblock The alternating linear scheme for tensor optimization in the tensor
  train format.
\newblock {\em SIAM Journal on Scientific Computing}, 34(2):A683--A713, 2012.

\bibitem{KressnerTobler:11}
D.~Kressner and C.~Tobler.
\newblock Low-rank tensor {K}rylov subspace methods for parametrized linear
  systems.
\newblock {\em SIAM J.\ Matrix Anal.\ Appl.}, 32:1288--1316, 2011.

\bibitem{KressnerUschmajew:14}
D.~Kressner and A.~Uschmajew.
\newblock On low-rank approximability of solutions to high-dimensional operator
  equations and eigenvalue problems.
\newblock arXiv:1406.7026 [math.NA], 2014.

\bibitem{Kuehn:12}
S.~K{\"u}hn.
\newblock {\em Hierarchische Tensordarstellung}.
\newblock PhD thesis, Universit{\"a}t Leipzig, 2012.

\bibitem{Ma:11}
S.~Ma, D.~Goldfarb, and L.~Chen.
\newblock Fixed point and bregman iterative methods for matrix rank
  minimization.
\newblock {\em Mathematical Programming}, 128(1-2):321--353, 2011.

\bibitem{Mirsky:60}
L.~Mirsky.
\newblock Symmetric gauge functions and unitarily invariant norms.
\newblock {\em Quarterly Journal of Mathematics}, 11:50--59, 1960.

\bibitem{Moreau:65}
J.~Moreau.
\newblock Proximit{\'e} et dualit{\'e} dans un espace hilbertien.
\newblock {\em Bulletin de la Soci{\'e}t{\'e} Math{\'e}matique de France},
  93:273--299, 1965.

\bibitem{Oseledets:09}
I.~Oseledets and E.~Tyrtyshnikov.
\newblock Breaking the curse of dimensionality, or how to use {SVD} in many
  dimensions.
\newblock {\em SIAM Journal on Scientific Computing}, 31(5):3744--3759, 2009.

\bibitem{Oseledets:11}
I.~V. Oseledets.
\newblock Tensor-train decomposition.
\newblock {\em SIAM Journal on Scientific Computing}, 33(5):2295--2317, 2011.

\bibitem{SchneiderUschmajew:14}
R.~Schneider and A.~Uschmajew.
\newblock Approximation rates for the hierarchical tensor format in periodic
  {S}obolev spaces.
\newblock {\em J. Complexity}, 30(2):56--71, 2014.

\bibitem{vidal}
G.~Vidal.
\newblock Efficient classical simulation of slightly entangled quantum
  computations.
\newblock {\em Phys.\ Rev.\ Lett.}, 91:147902, 2003.

\bibitem{Wrightetal:09}
S.~J. Wright, R.~D. Nowak, and M.~A.~T. Figueiredo.
\newblock Sparse reconstruction by separable approximation.
\newblock {\em {IEEE} Transactions on Signal Processing}, 57:2479--2493, 2009.

\end{thebibliography}

\end{document}